\def\endproof{\hspace*{\fill}\mbox{\ \rule{.1in}{.1in}}\medskip }
\def\t{\theta}
\def\f{\phi}
\def\D{\Delta}
\def\na{\nabla}
\def\r{\rho}
\def\ra{\vec{\rho}}
\def\bb{\bar B}
\def\bn{\bar N}
\def\bu{\bar U}
\def\n2{\|^2_{L^2}}
\newcommand\br{\begin{rem}}
\newcommand\er{\end{rem}}
\newcommand\bp{\begin{pmatrix}}
\newcommand\ep{\end{pmatrix}}
\newcommand\be{\begin{equation}}
\newcommand\ee{\end{equation}}
\newcommand\ba{\begin{equation}\begin{aligned}}
\newcommand\ea{\end{aligned}\end{equation}}
\newcommand{\CalP}{\mathcal{P}}
\newtheorem{theo}{Theorem}[section]
\newtheorem{lem}[theo]{Lemma}
\newtheorem{rem}[theo]{Remark}
\numberwithin{equation}{section}
\begin{document}

\title[Stokes-Boussinesq equation in a channel]{A stability result for  the 
Stokes-Boussinesq equations in infinite 3d channels}
\author{Marta Lewicka and Mohammadreza Raoofi}
\address{Marta Lewicka,  University of Pittsburgh, Department of Mathematics, 
301 Thackeray Hall, Pittsburgh, PA 15260, USA}
\address{Mohammadreza Raoofi, Isfahan University of Technology,
  Isfahan,Iran and
School of Mathematics, Institute for Research in Fundamental
Sciences (IPM),  P.O. Box 19395-5746, Tehran, Iran}
\email{lewicka@math.pitt.edu, raoofi@cc.iut.ac.ir}

\begin{abstract}
We consider the Stokes-Boussinesq (and the stationary Na\-vier-Stokes-Boussinesq) 
equations in a slanted, i.e. not aligned
with the gravity's direction, 3d channel and with an arbitrary
Rayleigh number. For the front-like initial data and under the 
no-slip boundary condition for the flow and
no-flux boundary condition for the reactant temperature, we
derive  uniform estimates on the burning rate and the flow velocity, which can be
interpreted as stability results for the laminar front.
 \end{abstract}

\maketitle

\section{Introduction and the main results}\label{introduction}

The Boussinesq system for a reactive flow is a  model describing 
flame propagation in a gravitationally stratified medium
\cite{ZBLM85}. It consists of the reaction-advection-diffusion
equation for the reactant temperature $\theta$ (normalised  so that
$0\leq\theta\leq 1$) and the Navier-Stokes
equations for the evolution of the incompressible flow $u$. The
variables $u$ and $\theta$ are coupled through the advection velocity
in the reaction equation, and
through the force term in the fluid equation.
After passing to nondimensional variables
\cite{BCR06}, the simplified Stokes-Boussinesq system takes the form:
\begin{align}
&\t_t +u\cdot\nabla\t-\D\t=f(\t), \label{heat}\\
&u_t -\nu\D u +\na p=\t\ra,  \label{fluid}\\
&\text{div}\, u =0. \label{solenoid}
\end{align}
Here, $\nu>0$ is the Prandtl number (inversely proportional to the
Reynolds number), while the reaction rate is given by a nonnegative,
nonlinear function $f(\t)$  of ignition type. That is, $f$ is
Lipschitz continuous, and there is a threshold temperature $0<\vartheta_0<1$  such that:
 \begin{equation}\label{f}
 f(\t)=0 \mbox{ for } \t\le \vartheta_0 \mbox{ and } \t\ge 1, \qquad
 f(\t)>0 \mbox{ on } (\vartheta_0, 1), \qquad f'(1)<0.
 \end{equation}
The vector $\ra=\rho \vec g$ corresponds
to the non-dimensional gravity $\vec g$ scaled by the Rayleigh number
$\rho$, and we assume that $\vec\rho$ is non-parallel to the unbounded
direction of the 3d channel $D$, which amounts to studying the system
(\ref{heat}) - (\ref{solenoid}) in:
$$D=(-\infty, \infty)\times \Omega = \{(x,\tilde x); ~ x\in
\mathbb{R}, ~ \tilde x\in\Omega\},$$
with:
$$\vec\rho \cdot e_3 \neq 0.$$
The crossection $\Omega$ is a sufficiently regular,
 connected and bounded domain in $\mathbb{R}^2$.
For the solutions to (\ref{heat}) - (\ref{solenoid}), we impose the Neumann  condition in
$\t$, and the no-slip (Dirichlet) boundary condition in $u$:
\begin{equation}\label{bcond}
\frac{\partial \theta}{\partial\vec n} = 0 \quad \mbox{ and } \quad
u=0 \mbox{ on } \partial D.
\end{equation}

The classical result by Kanel \cite{K61, B03} states that there exists
the unique speed $c_0$ (necessarily positive) and a (necessarily
decreasing) traveling wave profile $\Phi:\mathbb{R}\longrightarrow
[0,1]$, satisfying:
\begin{equation*}\label{laminar}
 -c_0\Phi' - \Phi'' = f(\Phi), \quad \Phi(-\infty)=1, \quad \Phi(+\infty)=0.
\end{equation*}
We call $c_0$ and $\Phi$ the {\em laminar speed} and {\em laminar
  front}, and to fix
the ideas, we let $\Phi(0) = \vartheta_0$. Then $\theta(x,t) =
\Phi(x-c_0t)$ is the unique (up to translations) traveling-wave
solution to:
\begin{equation}\label{oned}
\theta_t - \theta_{xx} = f(\theta), \qquad \theta (x,0) = \theta_0(x)
\end{equation}
joining the two equilibria $1$ and $0$.
Moreover, if the initial data $\theta_0(x)$ differs from the Heaviside
function $H(x)$ by a compactly supported error, then there exists a
shift $x_0$ such that, for the solution $\theta$ to (\ref{oned}) there holds:
$$\|\theta(\cdot, t) - \Phi(\cdot + c_0t + x_0)\|_{L^\infty(\mathbb{R})}
\to 0  \quad \mbox{ as } t\to\infty.$$
Our purpose is to reproduce this result for the problem (\ref{heat}) -
(\ref{solenoid}), (\ref{bcond}).
Following \cite{CKR03} and \cite{BCR06}, we define the \emph{bulk
burning rate} $\bar B(t)$ and the \emph{Nusselt number} 
$\bar N(t)$  by:
\begin{equation} \label{bn}
\begin{aligned}
&B(t) = \frac{1}{|\Omega|}\int_D f(\t)~\mbox{d}x~\mbox{d}\tilde x, 
\qquad &\bb(t) = \frac1t \int_0^tB(s)~\mbox{d}s\\
&N(t) = \frac{1}{|\Omega|}\int_D |\na\t|^2~\mbox{d}x~\mbox{d}\tilde x, 
\qquad &\bn(t) = \frac1t \int_0^tN(s)~\mbox{d}s,
\end{aligned}
\end{equation}
and the \emph{average  flow} $\bar U(t)$ by
\begin{equation} \label{ah}
 \bu(t) = \frac1t \int_0^t\|u(\cdot,s)\|_\infty ~\mbox{d}s.
\end{equation}

\begin{theo}\label{main}
Assume that the initial temperature $\t_0(x, \tilde x) \in [0,1]$ is
such that  $\theta_0(x,\tilde x) - H(x)$ is compactly supported in $D$.
Let $u_0\in W^{1,2}(D)$ and let $(\t, u, p)$ be a global solution of
\eqref{heat} - \eqref{solenoid}, \eqref{bcond} with:
$\theta(\cdot, 0) = \theta_0$ and $u(\cdot, 0) = u_0$.
Then, there exists a constant $C_\Omega$, depending only on $\Omega$, 
such that as $t\to+\infty$:
\begin{align}
&c_0 - C_\Omega\left(\frac{\r}{\nu}+\frac{\r^2}{\nu^2}\right) - o(1)\le
\bb(t) \le c_0 +
C_\Omega\left(\frac{\r}{\nu}+\frac{\r^2}{\nu^2}\right) + o(1), \label{estb}\\
&\bn(t)\le \left(C_\Omega\frac{\r}{\nu}
  +\sqrt{\frac{c_0}{2}+C_\Omega\frac{\r^2}{\nu^2}}\,\right)^2 
+ o(1),\label{estn}\\
&\bu(t)\le C_\Omega\left(\frac{\r}{\nu}+\frac{\r^2}{\nu^2}\right) + o(1).\label{estu}
\end{align}
\end{theo}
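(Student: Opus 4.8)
The plan is to compare the genuinely 3d reactive flow with the one-dimensional laminar front by exploiting conservation/dissipation identities, controlling the advection term $u\cdot\nabla\theta$ through energy estimates on the Stokes equation, and then feeding these back into the $L^2$-theory for $\theta$. First I would derive the basic energy balances. Multiplying \eqref{fluid} by $u$ and integrating over $D$ (using \eqref{solenoid}, the no-slip condition \eqref{bcond}, and the Poincaré inequality on the bounded crossection $\Omega$, whose constant absorbs into $C_\Omega$) gives
\begin{equation*}
\frac{1}{2}\frac{d}{dt}\|u\|_{L^2}^2 + \nu\|\nabla u\|_{L^2}^2 = \int_D \theta\,\vec\rho\cdot u \le \r\,\|\theta\|_{L^2}\,\|u\|_{L^2},
\end{equation*}
but since $\theta_0 - H(x)$ is only compactly supported, $\|\theta\|_{L^2}$ is infinite; the correct move, following \cite{CKR03,BCR06}, is to work with quantities that are finite because of the front structure — i.e. integrate against a bounded test field and track the crossectional averages. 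So instead I would test \eqref{fluid} against $u$ on the truncated domain $D\cap\{|x|\le R\}$, or better, use that $\d_x$ of the front decays and estimate $\|u(\cdot,t)\|_\infty$ via the Stokes semigroup: writing $u(t) = e^{-\nu tA}u_0 + \int_0^t e^{-\nu(t-s)A}\mathbb{P}(\theta\vec\rho)\,ds$ with $A$ the Stokes operator and $\mathbb{P}$ the Leray projection, and using the $L^\infty$-smoothing of the Stokes semigroup on the channel $D$ together with the uniform bound $0\le\theta\le 1$, one obtains $\|u(\cdot,t)\|_\infty \lesssim_\Omega \r/\nu + e^{-c\nu t}\|u_0\|$, which after time-averaging yields the $o(1)$ term in \eqref{estu} once the coefficient is sharpened to $C_\Omega(\r/\nu + \r^2/\nu^2)$ — the quadratic term arising from iterating the Duhamel formula once more to control $\nabla u$ and hence the nonlinear feedback in the full (stationary Navier–Stokes) case.

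Next I would establish the burning-rate bounds \eqref{estb}. The key identity, obtained by integrating \eqref{heat} over $\Omega$ for fixed $x$ and then over $x\in\R$, is that $\frac{d}{dt}\int_D \theta = \int_D f(\theta) = |\Omega|B(t)$ (the advection and Laplacian terms integrate to zero by \eqref{solenoid} and \eqref{bcond}); combined with $\int_D(\theta_0 - H) $ being finite, this already pins $\bar B(t)$ to the average speed of the level sets. To get the two-sided comparison with $c_0$ I would use the sliding/comparison method: construct sub- and supersolutions of the form $\Phi(x - c_0 t \mp \psi(t))$ perturbed to absorb the advection term $u\cdot\nabla\theta$, where the drift correction $\dot\psi(t)$ is controlled by $\|u(\cdot,t)\|_\infty$, hence by the bound just proved; the maximum principle for \eqref{heat} (valid since $f$ is Lipschitz and $\theta$ stays in $[0,1]$) then traps $\theta$ between these barriers, and time-averaging converts the pointwise drift $\int_0^t\|u\|_\infty\,ds = t\,\bar U(t)$ into the $C_\Omega(\r/\nu+\r^2/\nu^2)$ defect in \eqref{estb}, with the $o(1)$ coming from the compact-support error in the initial data relaxing under the 1d flow as in Kanel's theorem.

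Finally, the Nusselt bound \eqref{estn} I would get from the $\theta$-energy identity: multiplying \eqref{heat} by $\theta$ and integrating, the advection term drops, leaving $\frac12\frac{d}{dt}\|\theta - H\|_{L^2}^2 + \|\nabla\theta\|_{L^2}^2 = \int_D f(\theta)\theta$ up to boundary terms from $H$; time-averaging gives $\bar N(t) \le \frac1t\int_0^t\int_D f(\theta)\theta + (\text{lower order})$, and since $0\le\theta\le 1$ one bounds $\int f(\theta)\theta \le \int f(\theta) = |\Omega|B(s)$, so $\bar N(t) \lesssim \bar B(t) \le c_0 + C_\Omega\r^2/\nu^2 + o(1)$; the precise form $\big(C_\Omega\r/\nu + \sqrt{c_0/2 + C_\Omega\r^2/\nu^2}\big)^2$ suggests one should not discard $\nabla\theta$ so crudely but rather split $\int f(\theta)\theta$ using $f(\theta)\le \|f'\|_\infty(1-\theta)$ near $\theta=1$ and Cauchy–Schwarz against $\nabla\theta$, producing a quadratic inequality $\bar N \le a\sqrt{\bar N} + b$ whose solution is exactly that expression. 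I expect the main obstacle to be making the barrier construction in \eqref{estb} rigorous uniformly in time: the perturbed sub/supersolutions must remain ordered and the drift $\psi$ must not accumulate, which requires the velocity bound to hold pointwise in $t$ (not just on average) with the stated $\nu$-dependence, and this in turn forces careful use of the Stokes $L^\infty$ estimates on the unbounded channel $D$ — the genuinely 3d, $\nu$-explicit part of the argument — together with handling the stationary Navier–Stokes-Boussinesq case where the nonlinear term $u\cdot\nabla u$ reintroduces a quadratic smallness condition hidden in the constant $C_\Omega$.
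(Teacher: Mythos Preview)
Your overall architecture --- barriers of the form $\Phi(x-c_0t\mp\psi(t))$ to relate $\bar B$ to $\bar U$, an estimate on the Stokes equation to control $\bar U$, and a quadratic inequality to close --- matches the paper's. But the central technical step, controlling $\|u\|_\infty$, has a genuine gap. You correctly observe that $\theta\notin L^2(D)$ kills the naive energy estimate, but your proposed workaround via Duhamel and ``$L^\infty$-smoothing of the Stokes semigroup together with $0\le\theta\le 1$'' does not work: the Leray projection $\mathbb{P}$ is not bounded on $L^\infty$, so the bound $\|\mathbb{P}(\theta\vec\rho)\|_\infty\le C\rho$ is unjustified, and without it the Duhamel integral does not yield $\|u\|_\infty\le C\rho/\nu$. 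The device you are missing is that $\theta\vec\rho$ differs from a \emph{gradient} by a term controlled in $L^2$ by $\rho\|\nabla\theta\|_{L^2}$ (Lemma~\ref{h}): with $h(x,\tilde x)=\vec\rho\cdot e_1\int_0^x\fint_\Omega\theta + \vec\rho\cdot(0,\tilde x)\fint_\Omega\theta$ one has $\|\theta\vec\rho-\nabla h\|_{L^2}\le C_\Omega\rho\|\nabla\theta\|_{L^2}$. Since gradients are invisible to the Stokes equation, the effective forcing lies in $L^2$ and scales with $\rho\|\nabla\theta\|$. Testing \eqref{fluid} against $u$ and against $u_t$, time-averaging, and using the elliptic (or Xie) bound $\|u\|_\infty\le C_\Omega\big(\|\nabla u\|_{L^2}+\nu^{-1}\|u_t\|_{L^2}+\nu^{-1}\rho\|\nabla\theta\|_{L^2}\big)$ then gives
\[
\bar U(t)\le C_\Omega\,\frac{\rho}{\nu}\,\sqrt{\bar N(t)}+o(1).
\]
This coupling of $\bar U$ to $\sqrt{\bar N}$ --- rather than a direct bound $\bar U\le C\rho/\nu$ --- is also the true source of the $\rho^2/\nu^2$ term; it is not a Duhamel iteration or a Navier--Stokes nonlinearity as you suggest. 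Combining with $\bar N\le\tfrac12\bar B+\bar U+o(1)$ and $\bar B\le c_0+\bar U+o(1)$ yields $\bar N\le\tfrac12 c_0+C_\Omega(\rho/\nu)\sqrt{\bar N}+o(1)$, whose solution is exactly \eqref{estn}, and back-substitution gives \eqref{estu} and \eqref{estb}.

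A smaller discrepancy: the paper does not obtain the $\bar N$ inequality by testing \eqref{heat} against $\theta$. Instead, $\bar N\le\tfrac12\bar B+\bar U+o(1)$ is a byproduct of the barrier construction itself (Lemma~\ref{1stlem}), and those barriers require, in addition to the drift $\int_0^t\|u\|_\infty\,\mathrm{d}s$, an additive corrector $Q$ solving the passive advection--diffusion problem \eqref{form} together with a $C\sqrt{t}$ shift in the argument of $\Phi$ to absorb it. The needed decay $\|Q(\cdot,t)\|_\infty\le C_\Omega t^{-1/2}\|q\|_{L^1}$ is the 3d Nash-type estimate of Lemma~\ref{estymata}; your appeal to ``Kanel's theorem'' for the relaxation of the initial-data error is not sufficient here, because the relaxation must occur under the full 3d advection by $u$, not under the one-dimensional laminar flow.
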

The above result shows that the solution of the
initial-boundary problem for \eqref{heat} - \eqref{solenoid}, with
small $\rho/\nu$ propagates
with finite speed close to the laminar front speed. Also, note that if
we replace $\t$ with the laminar front $\Phi$, then a simple
integration of \eqref{oned}  gives: 
$ \bar B(t)=c_0.$
This corresponds with the estimates in Theorem \ref{main} 
when $\rho = 0$, i.e. the system
 \eqref{heat} - \eqref{solenoid} turns out to be 
a regular perturbation of the reaction--diffusion equation.

Our next result states that with front-like initial data,
the solution to the studied system stays front-like:
\begin{theo}\label{secondtheo}
With the same assumptions as in Theorem \ref{main}, we have:
\begin{equation}
\begin{split}
&\Phi\left(x-c_0t+x_0+\bu(t)t +C_{\Omega,0}\sqrt{t}\right)-\frac{C_{\Omega,0}}{\sqrt{t}}\le \t(x,\tilde
x, t) \\ & \qquad\qquad\qquad
\le \Phi\left(x-c_0t-x_0+\bu(t)t +C_{\Omega,
    0}\sqrt{t}\right)+\frac{C_{\Omega,0}}{\sqrt{t}} 
\qquad \forall t>> 1,
\end{split}
\end{equation}
with appropriate $x_0>0$ and $C_{\Omega,0}$ depending on $\Omega$, $f$
and the initial data $\theta_0$.
\end{theo}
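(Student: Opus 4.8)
The plan is to reduce the PDE-with-advection problem to a pair of one-dimensional reaction–diffusion inequalities by integrating out the cross-section $\Omega$ and controlling the advection term using the estimates on $\bar U(t)$ from Theorem \ref{main}. Concretely, I would introduce the cross-sectional average $\bar\theta(x,t) = \frac{1}{|\Omega|}\int_\Omega \theta(x,\tilde x, t)\,\mathrm{d}\tilde x$. Averaging \eqref{heat} over $\Omega$ and using the no-flux boundary condition \eqref{bcond} together with $\operatorname{div} u = 0$ and $u=0$ on $\partial D$, the diffusive flux through the lateral boundary vanishes, so $\bar\theta$ satisfies an equation of the form $\bar\theta_t - \bar\theta_{xx} = \overline{f(\theta)} - \partial_x\big(\overline{u_1\theta}\big)$, where $u_1$ is the $x$-component of $u$ and the overline denotes the same average. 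The two correction terms are the obstacle: $\overline{f(\theta)}$ differs from $f(\bar\theta)$ because $f$ is nonlinear and $\theta$ is not constant on cross-sections, and the advection term $\partial_x(\overline{u_1\theta})$ is genuinely present. For the advection term I would not attempt pointwise control; instead, write it as a drift and absorb it by a moving-frame change of variables $y = x - \int_0^t \bar u_1(s)\,\mathrm{d}s$-type shift, bounded in magnitude by $\int_0^t\|u(\cdot,s)\|_\infty\,\mathrm{d}s = \bar U(t)t$, which is exactly the quantity appearing in the statement; this is where the $\bar U(t)t$ term in the conclusion comes from.

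The main technical step is then a comparison-principle argument for the (inhomogeneous, drifted) one-dimensional equation against shifted laminar fronts $\Phi(\cdot \pm c_0 t \mp x_0 + \bar U(t)t + C\sqrt t)$. Here I would follow the classical Fife–McLeod / Kanel strategy adapted to an equation with a forcing error: build sub- and supersolutions of the form $\Phi(x - c_0 t + \xi(t)) \pm q(t)$ where $\xi(t)$ and $q(t)$ solve a coupled system of ODEs chosen so that the residual, after inserting into the drifted equation, dominates the error terms. The crucial inputs making $q(t) = C_{\Omega,0}/\sqrt t$ and the $\sqrt t$ shift work are: (i) an $L^2$-in-space, $L^1$-in-time decay estimate on the fluctuation $\theta - \bar\theta$, which should come from the energy/Nusselt-number bound \eqref{estn} combined with a Poincaré inequality on $\Omega$ (this controls $\|\overline{f(\theta)} - f(\bar\theta)\|$ in an integrated sense, giving an error that is square-integrable in time, hence producing the $\sqrt t$ scaling after integration); and (ii) the exponential decay of $\Phi'$ at $\pm\infty$ together with $f'(1)<0$, which provides the linear stability needed for the comparison functions to contract. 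The no-slip condition $u=0$ on $\partial D$ is essential in step (i) since it is what lets the energy estimate close.

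The remaining step is bookkeeping: convert the one-dimensional bound on $\bar\theta$ back into the stated pointwise bound on $\theta(x,\tilde x,t)$. Since $\theta \in [0,1]$ and $\theta - \bar\theta$ is small in $L^2_x$ uniformly in time (with integrable-in-time decay), a parabolic-regularity / maximum-principle bootstrap on the fluctuation equation upgrades this to an $L^\infty$ bound of order $t^{-1/2}$, which is absorbed into the $C_{\Omega,0}/\sqrt t$ term on both sides; the monotonicity of $\Phi$ then lets the spatial shift absorb any remaining lower-order discrepancy. The initial shift $x_0 > 0$ and the constant $C_{\Omega,0}$ are fixed at the start from the compact support of $\theta_0 - H$ and the size of $\|u_0\|_{W^{1,2}}$, exactly as in the reaction–diffusion case \eqref{oned}. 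I expect the genuinely hard part to be item (i): making the passage from the Nusselt-number bound \eqref{estn} to a time-integrable control of the nonlinear-averaging error $\overline{f(\theta)} - f(\bar\theta)$ quantitative enough to yield the advertised $\sqrt t$ rate, rather than merely $o(t)$.
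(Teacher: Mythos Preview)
Your plan diverges from the paper's argument and, as written, has a genuine gap at exactly the point you flag as ``the hard part''.

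The paper never averages over $\Omega$. It builds sub- and supersolutions for the full three-dimensional equation \eqref{heat} directly:
\[
\psi_{l,r}(x,\tilde x,t)=\Phi\big(x-c_0t\pm x_0\pm\bar U(t)t\pm C\sqrt{t}\big)\mp Q(x,\tilde x,t),
\]
where $Q$ solves the \emph{linear} advection--diffusion problem $Q_t+u\cdot\nabla Q-\Delta Q=0$ with initial data $q\in L^1(\mathbb R)$. The $\bar U(t)t$ shift is not derived from Theorem~\ref{main}; it is put in by hand so that its time derivative $\|u(\cdot,t)\|_\infty$ dominates the advection of $\Phi$, namely $u_1\Phi'$. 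The $C\sqrt{t}$ shift contributes $\frac{C}{2\sqrt t}\Phi'$, which controls $f(\Phi)-f(\Phi\mp Q)$ once one knows $\|Q(\cdot,t)\|_\infty\le C_\Omega t^{-1/2}\|q\|_{L^1}$. That decay is the content of Lemma~\ref{estymata}: a Nash-type inequality for advection--diffusion in a channel, proved by decomposing $Q$ into its cross-sectional mean plus a fluctuation and combining the 1d Nash inequality with Poincar\'e--Wirtinger on $\Omega$. So the $t^{-1/2}$ in Theorem~\ref{secondtheo} is pure linear parabolic dispersion, and the argument uses nothing from Theorem~\ref{main}; in fact Lemma~\ref{1stlem} is an \emph{input} to Theorem~\ref{main}, so your use of \eqref{estn} and \eqref{estu} here would be circular.

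Your step (i) does not produce the needed rate. The Nusselt bound \eqref{estn} says only that $\frac1t\int_0^t\|\nabla\theta(\cdot,s)\|_{L^2}^2\,ds$ stays bounded as $t\to\infty$; it gives no decay of $\|\nabla\theta(\cdot,t)\|_{L^2}$ and hence, via Poincar\'e, no decay of $\|\theta-\bar\theta\|_{L^2}$. From $\int_0^t\|\nabla\theta\|_{L^2}^2\le Ct$ you get at best $\int_0^t\|\nabla\theta\|_{L^2}\le C t$ (Cauchy--Schwarz), not $C\sqrt t$, so the averaging error $\overline{f(\theta)}-f(\bar\theta)$ contributes an $O(t)$ shift, not $O(\sqrt t)$. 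The same obstruction kills your final bootstrap: without pointwise-in-time decay of the fluctuation $\theta-\bar\theta$, parabolic regularity will not upgrade it to an $L^\infty$ bound of order $t^{-1/2}$. The mechanism you are missing is precisely Lemma~\ref{estymata}: rather than trying to show $\theta$ is close to a 1d object, one lets the \emph{corrector} $Q$ carry all the transverse structure and uses Nash to make it small.
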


Finally, we have:

 \begin{theo}\label{remi}
The results in Theorem \ref{main} and Theorem \ref{secondtheo} remain
valid in either of the following cases:
\begin{enumerate}
\item[(i)] the channel $D=(-\infty, +\infty)\times [0,\lambda]$ is
  2-dimensional
\item[(ii)] the flow equation (\ref{fluid}) is replaced by the stationary
  Navier-Stokes:
$$-\nu\Delta u + u\cdot \nabla u + \nabla p = \theta\vec\rho,$$
and the crossection $\Omega$ is sufficiently ``thin'', i.e it
satisfies the same type of condition as in \cite{LM09}:
$$\frac{\sqrt{3}}{2}\frac{\rho}{\nu\sqrt{\pi \nu}} |\Omega|^{1/2} C_P \left(C_{PW} +
  \left(\fint|\vec g\cdot (0,\tilde x)|^2~\mbox{d}\tilde x\right)^{1/2}
\right) < 1.$$
\end{enumerate}
\end{theo}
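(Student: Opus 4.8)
\smallskip
\noindent\textbf{Proof of Theorem~\ref{remi} (sketch).}
The starting point is that in the proofs of Theorem~\ref{main} and Theorem~\ref{secondtheo} the flow $u$ enters only through three structural facts: $\mathrm{div}\,u=0$, $u=0$ on $\d D$, and an $L^\infty$ bound on $u$ of the order $\r/\nu+\r^2/\nu^2$ that is uniform in time. All the remaining ingredients --- the burning-rate balance obtained by integrating \eqref{heat}, the integral estimates leading to \eqref{estb} and \eqref{estn}, the maximum principle for $\t$, and the comparison with the one-dimensional wave \eqref{oned} underlying Theorem~\ref{secondtheo} --- use only the geometry of $D=\R\times\Omega$, the Poincar\'e and Poincar\'e--Wirtinger inequalities on $\Omega$, and routine elliptic/parabolic regularity. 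Hence in each of the two cases it is enough to re-derive the stated control of $u$, after which the remainder of both proofs is reproduced without change.

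Case (i) requires nothing beyond bookkeeping. With $\Omega=[0,\l]$ all the functional inequalities and regularity estimates used in three dimensions remain valid, now with constants depending on $\l$; moreover the slantedness hypothesis becomes the requirement that the component of $\ra$ along $[0,\l]$ be non-zero, which is exactly the two-dimensional reading of $\ra$ not being parallel to the unbounded direction. Thus Theorem~\ref{main} and Theorem~\ref{secondtheo} hold with $C_\Omega$, $C_{\Omega,0}$ replaced by constants $C_\l$, $C_{\l,0}$.

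In case (ii) the evolution equation \eqref{fluid} is replaced by the stationary system $-\nu\D u+u\cdot\na u+\na p=\t\ra$, $\mathrm{div}\,u=0$, $u|_{\d D}=0$, so that $u(\cdot,t)$ is now determined elliptically by $\t(\cdot,t)$. The plan is to show, under the thin-channel smallness condition, that this problem is uniquely solvable for every $\t$ with $0\le\t\le1$ and that the solution satisfies $\|u(\cdot,t)\|_{L^\infty(D)}\le C_\Omega(\r/\nu+\r^2/\nu^2)$, uniformly in $t$ and in $\t$. This is the fixed-point construction of \cite{LM09}: one first splits off the explicit $x$-independent laminar shear flow driven by the far-field behaviour of $\t$ along the channel, reducing the problem to a correction whose forcing is supported in a bounded range of $x$; the smallness condition then turns the map $w\mapsto u$ --- with $u$ the solution of the linear Stokes problem with right-hand side $\t\ra-w\cdot\na w$, for $w$ ranging over a suitable ball of divergence-free fields vanishing on $\d D$ --- into a contraction, and the resulting $W^{1,2}$ bound is promoted to the $L^\infty$ bound by Stokes regularity. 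Since the bound is independent of $t$, the quantities $\bb$, $\bn$ and $\bu$ are estimated exactly as in Theorem~\ref{main}, and the front-tracking argument in the proof of Theorem~\ref{secondtheo} --- which uses only $\mathrm{div}\,u=0$, $u|_{\d D}=0$ and the bound on $\bu(t)$ --- carries over verbatim.

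The main obstacle is precisely this a priori analysis of the stationary flow in case (ii): accommodating the non-decay of $\t\ra$ at $x=-\infty$ through the laminar-background decomposition, running the \cite{LM09} contraction with the temperature-driven forcing $\t\ra$ (whose size is controlled by $0\le\t\le1$, hence by $\r$ times geometric quantities of $\Omega$), and upgrading the energy estimate to the $L^\infty$ bound by Stokes regularity, all with constants independent of $\t$ and of $t$. One should additionally check that $t\mapsto u(\cdot,t)$ inherits enough regularity from $t\mapsto\t(\cdot,t)$ for the coupled system and the time averages in \eqref{bn}--\eqref{ah} to make sense; this follows from the Lipschitz dependence of the stationary solution on its data, again a consequence of the smallness condition. \endproof
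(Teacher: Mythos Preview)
Your treatment of case (i) matches the paper's: nothing beyond bookkeeping is needed.

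For case (ii), however, your strategy diverges from the paper's in a way that introduces a real gap. You aim for a \emph{pointwise-in-time} bound $\|u(\cdot,t)\|_{L^\infty}\le C_\Omega(\r/\nu+\r^2/\nu^2)$, uniform in $t$ and in $\t$, obtained by subtracting an $x$-independent laminar shear and then running a contraction on the remainder ``whose forcing is supported in a bounded range of $x$''. But no such reduction is available here: after subtracting any $x$-independent background, the residual forcing $(\t-\t_{\mathrm{bkg}})\ra$ is neither compactly supported nor controlled in $L^2$ by the condition $0\le\t\le1$ alone (the front moves and may spread, so the $L^2$ norm of the residual can grow in $t$). The fixed-point scheme in \cite{LM09} is set up for traveling waves, where the unknowns include $\t$ itself and the geometry of the transition layer is part of the construction; it does not deliver, for an arbitrary front-like $\t$, a velocity bound independent of $\|\na\t\|_{L^2}$.

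The paper proceeds differently and this is where the thinness condition actually enters. Testing the stationary equation against $u$ and using Lemma~\ref{h} (which replaces $\t\ra$ by an $L^2$ quantity of size $C_\Omega\r\|\na\t\|_{L^2}$) gives, for each $t$,
\[
\|\na u\|_{L^2}\le \frac{\r}{\nu}\,C_P\Big(C_{PW}+\big(\textstyle\fint|\vec g\cdot(0,\tilde x)|^2\big)^{1/2}\Big)\|\na\t\|_{L^2}.
\]
Feeding this into Lemma~\ref{supest} (Xie's estimate, with $g=\t\ra-\na h-u\cdot\na u$ and $\|u\cdot\na u\|_{L^2}\le\|u\|_\infty\|\na u\|_{L^2}$) yields, after absorbing the $\|u\|_\infty^{1/2}$ term, a bound of the form $\|u\|_\infty\le C_\Omega\frac{\r}{\nu}\|\na\t\|_{L^2}+C\|\na\t\|_{L^2}^2$. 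Time-averaging then gives
\[
\bu(t)\le C_\Omega\frac{\r}{\nu}\sqrt{\bn(t)}+\frac{|\Omega|}{2\pi}\frac{\r^2}{\nu^3}C_P^2\Big(C_{PW}+\big(\textstyle\fint|\vec g\cdot(0,\tilde x)|^2\big)^{1/2}\Big)^2\bn(t),
\]
and inserting this into $\bn(t)\le\tfrac12 c_0+\tfrac32\bu(t)+o(1)$ (Lemmas~\ref{1stlem}--\ref{2ndlem}) produces a linear term in $\bn(t)$ on the right whose coefficient is exactly $\tfrac{3|\Omega|}{4\pi}\frac{\r^2}{\nu^3}C_P^2(\cdots)^2$. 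The stated smallness condition is precisely the requirement that this coefficient be $<1$, so that the inequality can be closed. In other words, the role of the ``thinness'' hypothesis is not (or not only) to make a fixed-point map contractive, but to absorb the quadratic-in-$\|\na\t\|$ contribution to $\bu(t)$ when it is fed back into the $\bn$ estimate. Your sketch misses this mechanism.
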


Existence of traveling fronts for the Navier-Stokes-Boussinesq system
 in infinite channels has been the subject  of research during recent years
 \cite{CKR03, BCR06, TV01, CLR06, Lew07, LM09}.  Of interest has been also an
 understanding of the regularizing and mixing effect of convection
\cite{CNR08, CKRZ08}.  In \cite{CKR03} and \cite{BCR06}, the
 solutions of the system with front-like datum in a 2d strip have
been considered and uniform estimates for the full
Navier-Stokes-Boussinesq system
have been obtained for the \emph{stress-free}
boundary conditions on $u$.
In this paper we generalize these results to dimension 3 and the
more physically relevant \emph{no-slip} boundary conditions.
The analysis follows \cite{BCR06} closely; it first
seeks bounds for $\bn$ and $\bb$ using the
parabolic equation \eqref{heat}. Where our argument diverges 
from that of  \cite{BCR06} is in finding an estimate for $\|u\|_{\infty}$. 
In \cite{BCR06}, this has been done using
Poincar\'e's inequality for  vorticity, 
based on the assumption that the vorticity vanishes at
the boundary, and using the rather simple form of the vorticity
equation in dimension two. 
In the present case, we rely on the almost-uniform Xie bound 
in Lemma \ref{supest}.

\bigskip

\noindent{\bf Acknowledgments.} 
M.L. was partially supported by the NSF grants DMS-0707275 and DMS-0846996,
and by the Polish MN grant N N201 547438.
M.R. was partially supported by the IPM grant no 90350025.


\section{Auxiliary results}\label{estimates}

Our first result is a technical lemma, which extends Lemma
4.3 in \cite{BCR06}, proved there for 2d channels
(see also \cite{CKRZ08}). Here we prove a similar 
result in dimension 3, showing that diffusion in a tube
behaves like 1d heat equation.
\begin{lem}\label{estymata}
Let $u\in W^{1,2}(D,\mathbb{R}^3)$ with $\mathrm{div } ~u = 0$ be a given
solenoidal flow satisfying $u=0$ on $\partial D$. 
Let $\phi$ be the solution to the advection-diffusion equation:
\begin{equation} \label{phi}
\begin{aligned}
&\f_t+u\cdot\na\f-\D\f = 0   \quad  \mbox{ for } (x,\tilde x) \in D \mbox{
  and } t>0\\
&\frac{\partial\phi}{\partial \vec n} (x,\tilde x, t)= 0 \quad \mbox{
  for } (x,\tilde x) \in \partial D\\
&\f(x,\tilde x, 0)=\f_0(x,\tilde x).
\end{aligned}
\end{equation}
Then there exists a constant $C_\Omega$ depending only on $\Omega$ (in
particular independent of $u$ and $\f_0$) such that, for all
sufficiently large $t$, there holds:
\begin{equation*}\label{fee}
 \|\f(\cdot,t)\|_\infty\le\frac{C_\Omega}{\sqrt{t}} \|\f_0\|_{L^1(D)} \quad \forall
 t >> 1.
\end{equation*}
\end{lem}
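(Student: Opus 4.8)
The plan is to reduce the three-dimensional advection-diffusion estimate to a one-dimensional decay estimate for the averaged quantity, exploiting that the cross-section is bounded and that $u$ is divergence-free and vanishes on $\partial D$. First I would introduce the cross-sectional average $\psi(x,t) = \fint_\Omega \f(x,\tilde x, t)~\mbox{d}\tilde x$. Differentiating under the integral and using the Neumann boundary condition on $\partial\Omega$ together with $\mathrm{div}~u = 0$ and $u|_{\partial D}=0$, the transverse contributions of $\D\f$ and of $u\cdot\na\f$ reorganize: the term $\fint_\Omega \D_{\tilde x}\f$ vanishes by the divergence theorem, and $\fint_\Omega u\cdot\na\f$ can be rewritten, after an integration by parts in $x$, as $\d_x$ of something controlled by $\|u\|$ and $\f$. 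So $\psi$ satisfies a one-dimensional heat equation $\psi_t - \psi_{xx} = \d_x g$ for a forcing $g$ that we must control, with $\int_\R |\psi(x,0)|~\mbox{d}x \le C_\Omega^{-1}\|\f_0\|_{L^1(D)}$.

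The second step is to control the oscillation $\f - \psi$ around its cross-sectional mean. Here I would use a Poincar\'e–Wirtinger inequality on $\Omega$: for each fixed $x$ and $t$, $\|\f(x,\cdot,t) - \psi(x,t)\|_{L^2(\Omega)} \le C_\Omega \|\na_{\tilde x}\f(x,\cdot,t)\|_{L^2(\Omega)}$, and the transverse gradient $\na_{\tilde x}\f$ decays because the Dirichlet-type spectral gap of $-\D_{\tilde x}$ on $\Omega$ (with Neumann condition, restricted to mean-zero functions) forces exponential-in-time decay of $\|\na_{\tilde x}\f\n2$ modulo the advection term; the key point, as in Lemma 4.3 of \cite{BCR06}, is that the advection term $\int_D u\cdot\na\f \cdot(\text{stuff})$ can be absorbed using $\mathrm{div}~u=0$ and $u|_{\partial D}=0$ so that it does not destroy the dissipation. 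This gives, after integration in time, an $L^2(D)$ bound on $\f - \psi$ that decays like a power of $t$.

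The third step combines the two: on the one-dimensional level, the fundamental solution of the heat equation gives $\|\psi(\cdot,t)\|_\infty \le \frac{C}{\sqrt t}\|\psi(\cdot,0)\|_{L^1(\R)} + (\text{contribution of } \d_x g)$, where the forcing contribution is estimated via $\|e^{t\d_{xx}}\d_x g\|_\infty \lesssim t^{-1}\|g\|_{L^1} + \dots$ and then shown to be lower order using the decay of $\f-\psi$ from step two. Finally, to pass from an $L^2$-type bound on $\f-\psi$ to an $L^\infty$ bound on $\f$ itself, I would invoke parabolic smoothing (a Nash–Moser / De Giorgi iteration, or simply the $L^2 \to L^\infty$ ultracontractivity estimate for the advection-diffusion semigroup with divergence-free drift vanishing on the boundary, which is available uniformly in $u$ because $\|u\|$ enters only through the drift term that integrates to zero against test functions), converting the already-established $L^2$ decay into the desired $\|\f(\cdot,t)\|_\infty \le \frac{C_\Omega}{\sqrt t}\|\f_0\|_{L^1(D)}$.

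The main obstacle I anticipate is obtaining constants that are genuinely independent of $u$: the advection term must be handled purely by structural identities ($\mathrm{div}~u = 0$, $u=0$ on $\partial D$, integration by parts) rather than by any bound on $\|u\|$, since no such bound is assumed in the hypotheses of the lemma. Concretely, the delicate point is showing that in the energy estimate for $\f - \psi$ (and in the estimate for the forcing $g$ on the one-dimensional equation) every occurrence of $u$ appears in a form like $\int_D u \cdot \na(\,\cdot\,)\,(\,\cdot\,)$ that vanishes or telescopes, so that the Prandtl number $\nu$ and the flow never enter the final constant $C_\Omega$. This is exactly the mechanism isolated in \cite{BCR06, CKRZ08} in two dimensions, and the work here is to check it survives the extra transverse dimension, where the cross-section $\Omega \subset \R^2$ is a genuine two-dimensional domain rather than an interval.
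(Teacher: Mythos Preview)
Your decomposition $\phi = \psi + (\phi-\psi)$ is the same one the paper uses, but you deploy it very differently, and the route you sketch has a genuine gap precisely at the point you flag as ``the main obstacle.'' When you average \eqref{phi} over $\Omega$ you obtain
\[
\psi_t - \psi_{xx} = -\partial_x\Big(\fint_\Omega u_1\phi~\mbox{d}\tilde x\Big) =: \partial_x g,
\]
and since $\fint_\Omega u_1 = 0$ (this follows from $\mathrm{div}\,u=0$, $u|_{\partial D}=0$, and $u\in L^2(D)$), one can write $g = -\fint_\Omega u_1(\phi-\psi)$. But any norm of $g$ --- say $\|g\|_{L^1_x}$, which is what the heat kernel estimate $\|e^{t\partial_{xx}}\partial_x g\|_\infty \lesssim t^{-1}\|g\|_{L^1}$ requires --- is bounded at best by $C\|u\|_{L^2(D)}\|\nabla_{\tilde x}\phi\|_{L^2(D)}$, and the factor $\|u\|_{L^2}$ cannot be removed by any structural identity. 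The same problem arises in your step~2: the energy identity for $\phi-\psi$ (or for $\nabla_{\tilde x}\phi$) does \emph{not} have a vanishing advection term; only the energy identity for $\phi$ itself enjoys the cancellation $\int_D (u\cdot\nabla\phi)\,\phi = 0$. So the mechanism you hope for --- ``every occurrence of $u$ appears in a form that vanishes or telescopes'' --- does not hold along the lines you propose.

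The paper (and \cite{BCR06}, whose Lemma~4.3 you cite, uses the same method) avoids this entirely by not writing any evolution equation for $\psi$ or $\phi-\psi$. Instead, the decomposition is used only to prove a \emph{static} Nash-type inequality
\[
\|\nabla\phi\|_{L^2(D)}^2 \geq C_\Omega\,\frac{\|\phi\|_{L^2}^6}{\|\phi\|_{L^1}^4 + \|\phi\|_{L^1}\|\phi\|_{L^2}^3},
\]
obtained by combining a one-dimensional Nash inequality for $\psi$ (via Fourier transform) with $\|k\|_{L^2}^2 \leq \|k\|_{L^1}^{2/3}\|k\|_{L^4}^{4/3} \leq C_\Omega\|k\|_{L^1}^{2/3}\|\nabla k\|_{L^2}^{4/3}$ for the mean-zero part $k=\phi-\psi$. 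This inequality involves no $u$ at all. The drift enters only through the full energy identity $\frac{d}{dt}\|\phi\|_{L^2}^2 = -2\|\nabla\phi\|_{L^2}^2$, where it vanishes exactly, and through $\|\phi\|_{L^1}\leq\|\phi_0\|_{L^1}$. Feeding the Nash inequality into the energy identity gives a closed differential inequality for $\|\phi\|_{L^2}$, yielding $\|\mathcal{P}_t\|_{L^1\to L^2}\leq C_\Omega t^{-1/4}$; the $L^1\to L^\infty$ bound then follows by the duality trick $\|\mathcal{P}_{2t}\|_{L^1\to L^\infty}\leq \|\mathcal{P}_t\|_{L^1\to L^2}\|\mathcal{P}_t^*\|_{L^1\to L^2}$, where $\mathcal{P}_t^*$ solves the same equation with drift $-u$.
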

\begin{proof}
{\bf 1.} We first prove a Nash-type inequality, valid for solutions of (\ref{phi}):
\begin{equation}\label{nash}
\|\na \phi(\cdot, t)\|_{L^2(D)}^2 \ge
C_\Omega\frac{\|\phi(\cdot, t)\|_{L^2}^6}{\|\phi(\cdot, t)\|^4_{L^1} +
  \|\phi(\cdot, t)\|_{L^1}\|\phi(\cdot, t)\|^3_{L^2}}  \qquad \forall t.
\end{equation}
To simplify the notation, in what follows we suppress the dependence
on $t$ and write $\phi$ instead of $\phi(\cdot, t)$, etc.

Define  $\psi(x)=\frac{1}{|\Omega|}\int_\Omega \phi(x, \tilde x) ~\mbox{d}\tilde x$ and
let $k(x,\tilde x)= \phi (x, \tilde x)-\psi (x)$, so that $\phi=\psi+k$
and $\int_\Omega k(x, \tilde x) ~\mbox{d}\tilde x=0$
for every $x\in\mathbb{R}$. Notice that, by Cauchy-Schwarz inequality:
\begin{equation}\label{2.6}
\|\psi\|^2_{L^2(D)}= |\Omega| \int_{\mathbb{R}} \left(\fint_\Omega \phi
~\mbox{d}\tilde x\right)^2 ~\mbox{d}x \leq \int_{\mathbb{R}} \int_\Omega
|\phi|^2 = \|\phi\|^2_{L^2(D)},
\end{equation}
Similarly:
\begin{equation}\label{2.6a}
\begin{split}
& \|\psi'\|_{L^2(D)}\le \|\phi_x\|_{L^2(D)}, \quad
\|\psi\|_{L^1(D)}\le \|\phi\|_{L^1(D)},\\
& \|k\|_{L^2}\le 2\|\phi\|_{L^2}, \quad \|\na k\|_{L^2}\le
2\|\na\phi\|_{L^2}, \quad  \|k\|_{L^1}\le 2\|\phi\|_{L^1}.
\end{split}
\end{equation}
Let $\hat\psi:\mathbb{R}\longrightarrow \mathbb{C}$ be the Fourier
transform of $\psi$, i.e.
$\hat\psi (\omega) = \int_\mathbb{R}\psi(s) e^{-2\pi i\omega s}
~\mbox{d}s.$
By Plancherel's identity, we have:
\begin{equation}\label{2.8}
\|\psi\|^2_{L^2(\mathbb{R})}= \|\hat\psi\|^2_{L^2(\mathbb{R})}, \qquad
\|\psi'\|^2_{L^2(\mathbb{R})}= \|2\pi i\omega\hat\psi\|^2_{L^2(\mathbb{R})}. 
\end{equation}
For a given positive $m$, we now write:
\begin{equation*}
\begin{split}
\|\psi\|^2_{L^2(\mathbb{R})} &=
\int_{|\omega|\le m}|\hat\psi(\omega)|^2~\mbox{d}\omega 
+ \int_{|\omega|> m}|\hat\psi(\omega)|^2~\mbox{d}\omega\\
&\leq 2 m \|\psi\|^2_{L^1(\mathbb{R})} + \frac{1}{m^2}
\int_\mathbb{R}\omega^2|\hat\psi(\omega)|^2 ~\mbox{d}\omega \\ &\leq
2 m \|\psi\|^2_{L^1(\mathbb{R})} + 
 \frac{1}{4\pi^2m^2}\|\psi'\|^2_{L^2(\mathbb{R})}
\end{split}
\end{equation*}
where the estimate of the first term follows by: $|\hat\psi(\omega)
|\leq \|\psi\|_{L^1(D)}$, while to estimate the second term we used  (\ref{2.8}).

Setting $m= ||\psi'||^{2/3}_{L^2(\mathbb{R})}||\psi||^{-2/3}_{L^1(\mathbb{R})}$, we obtain:
\begin{equation*}
\|\psi\|^2_{L^2(\mathbb{R})}\le
(\frac{1}{4\pi^2}+2)\|\psi\|^{4/3}_{L^1(\mathbb{R})}\|\psi'\|^{2/3}_{L^2(\mathbb{R})},
\end{equation*}
which implies:
\begin{equation*}
\|\psi\|^2_{L^2(D)}\le
(\frac{1}{4\pi^2}+2) |\Omega|^{-2/3}\|\psi\|^{4/3}_{L^1(D)}\|\psi_x\|^{2/3}_{L^2(D)}.
\end{equation*}
Now, by Cauchy-Schwarz inequality, the Sobolev embedding
$W^{1,2}(D)\hookrightarrow L^4(D)$, and the Poincare-Wirtinger
inequality on $\Omega$, if follows that:
\begin{equation*}
\|k\|^2_{L^2(D)}\le \|k\|^{2/3}_{L^1(D)} \|k\|^{4/3}_{L^4(D)}\le C_\Omega \|k\|^{2/3}_{L^1(D)} \|\na k\|^{4/3}_{L^2(D)}.
\end{equation*}
Therefore, by (\ref{2.6}) and (\ref{2.6a}):
\begin{equation*}
\begin{split}
\|\phi\|^2_{L^2(D)} & \le 2(\|\psi\|^2_{L^2(D)}+ 2\|k\|^2_{L^2(D)})
\\ & \le C_\Omega
\left(\|\phi||^{4/3}_{L^1(D)}\|\nabla\phi\|^{2/3}_{L^2(D)} + \|\phi\|^{2/3}_{L^1(D)} \|\na \phi\|^{4/3}_{L^2(D)}\right).
\end{split}
\end{equation*}
We now argue as in \cite{BCR06}. Since
$y:=\|\nabla\phi\|^{2/3}_{L^2}$ satisfies: $ay^2 + by - c \geq 0$ with
appropriate $a,b,c\geq 0$, then: $y\geq \frac{-b + \sqrt{b^2+4ac}}{2a}
= \frac{2c}{b+\sqrt{b^2+4ac}} \geq \frac{c}{\sqrt{b^2+4ac}}$. Hence:
\begin{equation*}
 \|\na\phi\|_{L^2}^{2/3}\ge \|\phi\|^2_{L^2}\left(\|\phi||_{L^1}^{8/3}
   + \|\phi\|_{L^1}^{2/3}\|\phi\|^2_{L^2}\right)^{-1/2},
\end{equation*}
which gives:
\begin{equation*}
\|\na\phi\|_{L^2}^{2}\ge
C_\Omega\|\phi\|^6_{L^2}\left(\|\phi\|_{L^1}^{4} + \|\phi\|_{L^1}\|\phi\|^3_{L^2}\right)^{-1},
\end{equation*}
yielding exactly \eqref{nash}.

\smallskip

{\bf 2.} Recall now that:
\begin{equation}\label{2.3}
 \|\f\|_{L^1(D)} \le \|\f_0\|_{L^1(D)}.
\end{equation}
Indeed, the $L^1$ norm of a solution to \eqref{phi}  is conserved when
the initial data is positive.
In the general case one can write $\f_0=\f_0^+-\f_0^-$ where $\f_0^+$
and $\f_0^-$ are positive,  with disjoint supports. Solving
\eqref{phi} for each, one obtains the inequality (\ref{2.3}).
Further, integrating (\ref{phi}) against $\phi$ and using
incompressibility of $u$ and the boundary condition, it follows that:
\begin{equation}\label{2.4}
\frac{d}{\mbox{d}t}\|\phi\|_{L^2}^2=-2 \|\na\phi\|_{L^2}^2.
\end{equation}
In view of (\ref{nash}), (\ref{2.3}), (\ref{2.4}) we now obtain:
\begin{equation*}
\frac{d\|\phi\|_{L^2}}{\mbox{d}t}
 \le C_\Omega\frac{\|\phi\|^5_{L^2}}{\|\phi_0\|^4_{L^1} + \|\phi_0\|_{L^1}\|\phi\|^3_{L^2}},
\end{equation*}
which, after integrating in time, gives:
\begin{equation}\label{2.18}
t\le C_\Omega\left(\frac{\|\phi_0\|^4_{L^1}}{\|\phi\|^4_{L^2}} + \frac{\|\phi_0\|_{L^1}}{\|\phi\|_{L^2}}\right).
\end{equation}
Call $\alpha = \|\phi\|_{L^2}/ \|\phi_0\|_{L^1}$. From (\ref{2.18}) it
follows that $\frac{\alpha^4}{\alpha^3 + 1}\leq \frac{C_\Omega}{t}$.
The function $\alpha\mapsto\frac{\alpha^4}{\alpha^3 +1}$ is increasing
and it converges to $0$ as $\alpha\to 0$. Let $\beta$ be the unique
solution to $\frac{\beta^4}{\beta^3 + 1} =  \frac{C_\Omega}{t}$, so
that $\alpha\leq \beta$. Now, for $t\to\infty$ clearly
$\frac{C_\Omega}{t}\to 0$, hence also $\beta\to 0$ and 
$\frac{\beta^4}{2} \leq \frac{\beta^4}{\beta^3 + 1} = \frac{C_\Omega}{t}$.
Consequently, $\beta \leq \frac{C_\Omega}{t^{1/4}}$ and we arrive at:
$$ \|\phi\|_{L^2}\leq
\frac{C_\Omega}{t^{1/2}}\|\phi_0\|_{L^1} \qquad \forall t>>1.$$

\smallskip

{\bf 3.} We now argue as in \cite{BCR06}.
Let $\CalP_t$ be the solution operator for \eqref{phi}. Thus far, we have showed that:
\begin{equation*}\label{pt}
\|\CalP_t\|_{L^1\to L^2}\le \frac{C_\Omega}{t^{1/4}} \qquad \forall t>>1.
\end{equation*}
Now if $\CalP_t^*$ is the adjoint operator, then $\CalP_t^*$ is the
solution operator to (\ref{phi}),  with $u$ replaced by $-u$. Therefore the above argument works again:
\begin{equation*}\label{pt*}
\|\CalP^*_t\|_{L^1\to L^2}\le \frac{C_\Omega}{t^{1/4}} \qquad \forall t>>1.
\end{equation*}
Finally, we conclude the lemma:
\begin{equation*}\label{ptt}
\begin{split}
\|\CalP_{2t}\|_{L^1\to L^{\infty}} & \le \|\CalP_t\|_{L^1\to
  L^2}\|\CalP_t\|_{L^2\to L^{\infty}} \\ & =\|\CalP_t\|_{L^1\to
  L^2}\|\CalP^*_t\|_{L^1\to L^2} \leq \frac{C_\Omega}{t^{1/2}} \qquad
\forall t>>1.
\end{split}
\end{equation*}
\end{proof}

We now present a lemma taken from \cite{LM09}:
\begin{lem} \label{supest}
Let $g\in L^2(D)$.
There exists a constant $C_\Omega$, depending only on the crossection
$\Omega$, such that for any solenoidal flow $u\in W^{2,2}\cap
W^{1,2}_0(D)$ satisfying:
\begin{equation*}\label{upf}
-\nu \D u+\na p= g , \qquad \mbox{div } u = 0 \mbox{ in } D,
\end{equation*}
there holds:
\begin{equation}\label{ineq}
\|u\|_{\infty}\le \frac{\sqrt{2}}{\sqrt{\nu\pi}}\|\na u\|^{1/2}_{L^2(D)}
\|g\|^{1/2}_{L^2(D)}+ C_\Omega\|\na u\|_{L^2(D)}.
\end{equation}
\end{lem}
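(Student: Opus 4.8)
The plan is to derive \eqref{ineq} from two ingredients: a pointwise bound on $u$ by its gradient through a sliced one-dimensional Agmon/Gagliardo--Nirenberg estimate in the unbounded $x$-direction, and an $L^\infty$ bound on $u$ in the cross-sectional directions obtained from elliptic regularity for the Stokes system together with the Sobolev embedding in $D$. First I would exploit the no-slip boundary condition $u=0$ on $\partial D$: for fixed $\tilde x\in\Omega$, the function $x\mapsto u(x,\tilde x)$ decays in $x$ (it is in $W^{1,2}(\RR)$ for a.e. line through $D$), so the one-dimensional interpolation inequality gives $\|u(\cdot,\tilde x)\|_{L^\infty(\RR)}^2\le 2\|u(\cdot,\tilde x)\|_{L^2(\RR)}\|\partial_x u(\cdot,\tilde x)\|_{L^2(\RR)}$; integrating over $\Omega$ and using Cauchy--Schwarz yields a bound of the shape $\|u\|_{L^\infty(D)}\lesssim \|u\|_{L^2(D)}^{1/2}\|\na u\|_{L^2(D)}^{1/2}$ up to controlling the transverse modulus of continuity. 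However this only bounds $u$ restricted to each line, and one still needs the full $L^\infty(D)$; the cross-sectional control is what the $C_\Omega\|\na u\|_{L^2}$ term absorbs, via Poincar\'e on $\Omega$ combined with trace/embedding constants that depend only on the geometry of $\Omega$.

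The key step, and the one where I expect the real work to be, is turning $\|u\|_{L^2(D)}$ into $\|g\|_{L^2(D)}$ with the \emph{sharp} constant $\sqrt{2}/\sqrt{\nu\pi}$. I would test the Stokes equation $-\nu\D u+\na p=g$ against $u$ itself: integrating over $D$, the pressure term drops because $\mathrm{div}\,u=0$ and $u=0$ on $\partial D$, giving $\nu\|\na u\|_{L^2(D)}^2=\int_D g\cdot u$. To get an $L^2$-to-$L^2$ resolvent-type bound one needs the spectral gap of the Dirichlet Laplacian (or Stokes operator) on $D$. Since $D=\RR\times\Omega$ is unbounded in $x$, there is no gap from the $x$-direction, but the Dirichlet eigenvalue problem on the bounded cross-section $\Omega$ supplies the first eigenvalue $\mu_1(\Omega)$, and $\pi$ presumably enters through an explicit lower bound $\mu_1(\Omega)\ge \pi^2/(\mathrm{something})$ or, more likely, through a one-dimensional Poincar\'e-type constant applied in a slicing argument; this is the delicate accounting that produces the stated numerical constant, and I would follow \cite{LM09} essentially verbatim here.

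Concretely, I would organize the proof as follows: Step 1, record the energy identity $\nu\|\na u\|_{L^2(D)}^2=\int_D g\cdot u$ obtained by testing against $u$ and using solenoidality plus the boundary condition. Step 2, establish the interpolation inequality $\|u\|_{L^\infty(D)}\le \sqrt{2}\,\|u\|_{L^2(D)}^{1/2}\|\partial_x u\|_{L^2(D)}^{1/2}+C_\Omega\|\na u\|_{L^2(D)}$ by the sliced Agmon argument in the $x$-variable with the transverse oscillation controlled by Poincar\'e--Wirtinger on $\Omega$ (this is where the ``$+C_\Omega\|\na u\|_{L^2}$'' correction is born). Step 3, from Step 1 bound $\|u\|_{L^2(D)}$: write $\int_D g\cdot u\le \|g\|_{L^2(D)}\|u\|_{L^2(D)}$ and combine with the one-dimensional Poincar\'e inequality in the slices, i.e. $\|u(\cdot,\tilde x)\|_{L^2(\RR)}\le$ (const)$\|\partial_x u(\cdot,\tilde x)\|_{L^2(\RR)}$ is false on the whole line, so instead one directly estimates $\|u\|_{L^2(D)}^2\le \tfrac{1}{\nu\mu_1(\Omega)}$ times a gradient term using the cross-sectional Dirichlet eigenvalue; chasing the constants gives $\|u\|_{L^2(D)}^{1/2}\le (\nu\pi)^{-1/2}\|g\|_{L^2(D)}^{1/2}/\|\na u\|_{L^2(D)}^{1/2}$-type relations that, inserted into Step 2, collapse to \eqref{ineq}. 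Step 4, assemble the pieces and absorb all purely geometric factors into a single constant $C_\Omega$. The main obstacle is Step 3: making the slicing-plus-spectral-gap bookkeeping yield precisely the constant $\sqrt{2}/\sqrt{\nu\pi}$ rather than some unspecified $C_\Omega/\sqrt{\nu}$; since the lemma is quoted from \cite{LM09}, I would import that computation rather than reconstruct it, and simply indicate the energy estimate and the Agmon slicing as the conceptual backbone.
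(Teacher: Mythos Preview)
Your approach has a genuine gap that cannot be repaired along the lines you sketch. The inequality you aim for in Step~2,
\[
\|u\|_{L^\infty(D)}\le \sqrt{2}\,\|u\|_{L^2(D)}^{1/2}\|\partial_x u\|_{L^2(D)}^{1/2}+C_\Omega\|\na u\|_{L^2(D)},
\]
is simply false in a three-dimensional channel: the right-hand side involves only $H^1$ quantities, and $H^1(D)$ does not embed into $L^\infty(D)$ when $D\subset\RR^3$. Your sliced Agmon argument controls $\sup_x|u(x,\tilde x)|$ for each fixed $\tilde x$, but turning that into a supremum over $\tilde x\in\Omega$ requires $L^\infty(\Omega)$ control, and Poincar\'e--Wirtinger on the two-dimensional cross-section $\Omega$ only gives $L^2$ (or at best $L^p$, $p<\infty$) from $\|\na u\|_{L^2}$. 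No amount of spectral-gap bookkeeping in Step~3 can fix this: you must access second derivatives of $u$ somewhere, and your scheme never does.

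The paper's route is entirely different and is precisely designed to bring in second derivatives. It rests on Xie's sharp pointwise bound
\[
\|u\|_\infty \le \frac{1}{\sqrt{2\pi}}\,\|\D u\|_{L^2(D)}^{1/2}\|\na u\|_{L^2(D)}^{1/2},
\]
valid for any $u\in W^{2,2}\cap W^{1,2}_0$ on an arbitrary domain, and then controls $\|\D u\|_{L^2}$ in terms of $\|g\|_{L^2}$ and $\|\na u\|_{L^2}$. The obstruction here is the pressure: from $-\nu\D u+\na p=g$ one only gets $\nu\CalP\D u=-\CalP g$ after Helmholtz projection, and although $\CalP u=u$, in general $\CalP\D u\neq \D u$. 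The commutator estimate
\[
\|(\CalP\D-\D\CalP)u\|_{L^2(D)}^2\le \Big(\tfrac12+\epsilon\Big)\|\D u\|_{L^2(D)}^2 + C_{D,\epsilon}\|\na u\|_{L^2(D)}^2
\]
closes this gap and produces $\|\D u\|_{L^2}\le \frac{\sqrt{2}}{\nu}\|g\|_{L^2}+C_\Omega\|\na u\|_{L^2}$ (up to $\epsilon$), which inserted into Xie's bound gives exactly \eqref{ineq}. In particular, the constant $\sqrt{2}/\sqrt{\nu\pi}$ has nothing to do with the first Dirichlet eigenvalue of $\Omega$: the $1/\sqrt{2\pi}$ is Xie's universal constant and the extra $\sqrt{2}$ comes from the $\tfrac12$ in the commutator bound.
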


We remark that the proof of (\ref{ineq}) relies on Xie's estimate \cite{Xie92}:
\begin{equation*}\label{xie}
 \|u\|_\infty \le \frac{1}{\sqrt{2\pi}}\|\D u\|_{L^2(D)}^{1/2}\|\na u\|_{L^2(D)}^{1/2}
\end{equation*}
valid for $u\in W^{2,2}(D)\cap W^{1,2}_0(D),$ and on a commutator estimate
\cite{LLP07}:
\begin{equation*}\label{pll}
 \|(\CalP\D-\D\CalP)u\|^2_{L^2(D)}\le \left(\frac12
   +\epsilon\right)\|\D u\|^2_{L^2(D)}  + C_{D,\epsilon}\|\na u\|^2_{L^2(D)}
\end{equation*}
where $\CalP$ is the Helmholz projection onto the space of solenoidal vector
fields. 

The interest in the inequality (\ref{ineq}) lies in the independence of the
constant $\frac{\sqrt{2}}{\sqrt{\pi \nu}}$ at the term involving $g$.
Indeed, this was the key argument allowing to prove \cite{LM09} existence of traveling
waves for the full Navier-Stokes-Boussinesq system in 3d channels,
satisfying appropriate ``thinness'' condition on the crossection
$\Omega$. 
The same argument is needed to obtain the uniform bounds for the stationary
Navier-Stokes-Boussinesq system in Theorem \ref{remi} (ii).

On the other hand, by elliptic estimates and the Sobolev imbedding, it follows directly that:
\begin{equation}\label{easy}
\|u\|_\infty \leq C_\Omega \|u\|_{W^{2,2}(D)} \leq C_\Omega \left(\|\na u\|_{L^2(D)}
+ \frac{1}{\nu}\|g\|_{L^2(D)}\right).
\end{equation}
In fact, already this inequality is sufficient for the 
estimates in case of the Stokes-Boussinesq system.

\bigskip

We will also need the following result, in the line of Lemma 3.6 from \cite{LM09}:
\begin{lem}\label{h}
For each $t$, there exists a function $h\in W^{1,2}_{loc}(D)$, such that:
\begin{equation*}
\|\t(\cdot, t) \ra-\na h\|_{L^2(D)}\le C_\Omega \r \|\na \t(\cdot, t)\|_{L^2(D)},
\end{equation*}
with $C_\Omega$ depending only on  $\Omega$.
In fact:
\begin{equation}\label{const_exact}
C_\Omega = C_{PW}+ \left(\fint_\Omega|\vec g \cdot (0,\tilde
  x)|^2~\mbox{d}\tilde x\right)^{1/2},
\end{equation}
where $C_{PW}$ stands for the Poincare-Wirtinger constant of $\Omega$.
\end{lem}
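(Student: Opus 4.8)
The plan is to construct $h$ explicitly, by splitting $\t$ into its cross-sectional average and fluctuation and splitting the gravity direction into its along-channel and transverse components. Write $\vec g=(g_1,\vec g_\perp)$ with $g_1=\vec g\cdot e_1\in\RR$ and $\vec g_\perp\in\RR^2$, and put $\ell(\tilde x)=\vec g_\perp\cdot\tilde x=\vec g\cdot(0,\tilde x)$. Fix $t$, suppress it from the notation, and set $\bar\t(x)=\fint_\Omega\t(x,\tilde x)\,\mbox{d}\tilde x$ and $\t'=\t-\bar\t$, so that $\fint_\Omega\t'(x,\cdot)\,\mbox{d}\tilde x=0$ for every $x$. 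We may assume $\na\t\in L^2(D)$ (together with $0\le\t\le1$ this also gives $\t\in W^{1,2}_{loc}(D)$), since otherwise the right-hand side is infinite and $h\equiv0$ works.

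Define
\begin{equation*}
h(x,\tilde x)=\r\Big(g_1\int_0^x\bar\t(s)\,\mbox{d}s+\bar\t(x)\,\ell(\tilde x)\Big).
\end{equation*}
Because $0\le\bar\t\le1$, because $\ell$ is bounded on the bounded set $\Omega$, and because Cauchy--Schwarz in $\tilde x$ applied to $\bar\t'(x)=\fint_\Omega\d_x\t(x,\tilde x)\,\mbox{d}\tilde x$ gives $\|\bar\t'\|_{L^2(\RR)}^2\le\frac1{|\Omega|}\|\d_x\t\|_{L^2(D)}^2<\infty$, we have $h\in W^{1,2}_{loc}(D)$. (The first summand grows linearly in $x$, which is precisely why only local integrability is asserted.) A direct differentiation gives $\na h=\r\,\bar\t\,\vec g+\r\,\bar\t'(x)\,\ell(\tilde x)\,e_1$, and therefore
\begin{equation*}
\t\ra-\na h=\r\,\t'\vec g-\r\,\bar\t'(x)\,\ell(\tilde x)\,e_1.
\end{equation*}

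It remains to estimate the two terms on the right in $L^2(D)$. For the first, apply the Poincar\'e--Wirtinger inequality on $\Omega$ to $\t'(x,\cdot)$ for each $x$ and integrate over $x$: $\|\t'\|_{L^2(D)}\le C_{PW}\|\tilde\na\t\|_{L^2(D)}\le C_{PW}\|\na\t\|_{L^2(D)}$, hence (using $|\vec g|=1$) $\|\t'\vec g\|_{L^2(D)}\le C_{PW}\|\na\t\|_{L^2(D)}$. For the second, Fubini and the bound on $\bar\t'$ give
\begin{equation*}
\|\bar\t'(x)\,\ell(\tilde x)\|_{L^2(D)}^2=\Big(\int_\RR|\bar\t'|^2\Big)\Big(\int_\Omega|\ell|^2\Big)\le\frac1{|\Omega|}\|\na\t\|_{L^2(D)}^2\cdot|\Omega|\fint_\Omega|\vec g\cdot(0,\tilde x)|^2\,\mbox{d}\tilde x,
\end{equation*}
so this term is at most $\big(\fint_\Omega|\vec g\cdot(0,\tilde x)|^2\,\mbox{d}\tilde x\big)^{1/2}\|\na\t\|_{L^2(D)}$. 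Adding the two bounds by the triangle inequality yields $\|\t\ra-\na h\|_{L^2(D)}\le\r\big(C_{PW}+(\fint_\Omega|\vec g\cdot(0,\tilde x)|^2\,\mbox{d}\tilde x)^{1/2}\big)\|\na\t\|_{L^2(D)}$, which is the claim with $C_\Omega$ as in \eqref{const_exact}. The only points that deserve a word of care are the membership $h\in W^{1,2}_{loc}(D)$ and the identity $\bar\t'=\fint_\Omega\d_x\t$ (differentiation under the cross-sectional integral), both immediate from $\na\t\in L^2(D)$ and the boundedness of $\t$; there is no genuine obstacle.
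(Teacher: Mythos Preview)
Your proof is correct and takes essentially the same approach as the paper: you construct the identical function $h$ (the paper writes it as $\vec\rho\cdot e_1\int_0^x\fint_\Omega\theta+\vec\rho\cdot(0,\tilde x)\fint_\Omega\theta$, which is exactly your $\rho(g_1\int_0^x\bar\theta+\bar\theta\,\ell)$), derive the same identity for $\theta\vec\rho-\nabla h$, and bound the two pieces by Poincar\'e--Wirtinger and Cauchy--Schwarz respectively. Your write-up is in fact more complete than the paper's, which only states the identity and invokes Poincar\'e--Wirtinger without spelling out the estimate of the second term or the $W^{1,2}_{loc}$ membership.
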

\begin{proof}
Let $e_1, e_2, e_3$ be the standard basis for $\mathbb{R}^3$.
Suppressing the time variable $t$, we define:
\begin{equation*}
h(x,\tilde x)= \ra \cdot e_1\int_0^x\fint_\Omega\t(x,\tilde x)~\mbox{d}\tilde x ~\mbox{d}x +
\ra \cdot (0,\tilde x)\fint_\Omega\t(x,\tilde x) ~\mbox{d}\tilde x. 
\end{equation*}
Therefore, the following identity concludes the proof of lemma:
\begin{equation*}
\t\ra -\na h= \ra\left(\t(x, \tilde x) - \fint_\Omega\t(x,\tilde x) ~\mbox{d}\tilde x\right)-
\ra \cdot (0,\tilde x) \fint_\Omega\t_x(x,\tilde x)~\mbox{d}\tilde x~ e_1,
\end{equation*}
by the Poincare-Wirtinger inequality: $\|\theta -
\fint \theta \|_{L^2(\Omega)} \leq C_{PW} \|\nabla\theta\|_{L^2(\Omega)}$.
\end{proof}

\section{Proofs of the main result}
The following lemma has been proven in \cite{BCR06} for the case of 2d channels
and vorticity of the flow $u$ vanishing at $\partial D$. Exactly the
same proof, relying on the construction of super and sub-solutions to 
(\ref{heat}) is valid also in the present 3d case. Since the argument
uses the estimate in Lemma \ref{estymata}, we partially reproduce it
below for the sake of completeness.

\begin{lem} \label{1stlem}
There exists a constant $C_{\Omega,0}$ depending on $\Omega$, $f$,
and on the initial data $\t_0$, such that:
\begin{equation}\label{1st}
\bn(t) \le \frac 12 \bb(t) + \bu(t) + C_{\Omega, 0}
\left(\frac1t+\frac{1}{\sqrt{t}}\right) \qquad \forall t>>1.
\end{equation}
Moreover, there exists $x_0>0$ and $q\in L^1(\mathbb{R})$, such that
for all sufficiently large $t>>1$:
\begin{equation}\label{uplowbound}
\begin{split}
& \Phi\left(x-c_0t+x_0+\bar U(t)t + C_{\Omega,0}\sqrt{t}\right) -
Q(x,\tilde x, t)
\leq \theta(x, \tilde x, t) \\ 
& \qquad\qquad \qquad \leq 
\Phi\left(x-c_0t-x_0 - \bar U(t)t - C_{\Omega,0}\sqrt{t}\right) +
Q(x,\tilde x, t),
\end{split}
\end{equation}
where $Q$ is the solution to:
\begin{equation}\label{form}
\begin{split}
& Q_t + u\cdot\nabla Q - \Delta Q = 0 \quad \mbox{ in } D,\\
& \frac{\partial Q}{\partial \vec n} = 0 \mbox{ on } \partial D, \qquad 
Q(x,\tilde x, 0) = q(x).
\end{split}
\end{equation}
\end{lem}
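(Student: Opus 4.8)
The plan is to follow the super/sub-solution construction of \cite{BCR06}, adapted so that the flow velocity $u$ enters only through the running average $\bu(t)$. First I would set up the one-dimensional comparison functions. Write $\xi = x - c_0 t$ and look for a supersolution of \eqref{heat} of the form $\Phi(\xi - X(t)) + Q(x,\tilde x, t)$, where $Q$ solves the homogeneous advection--diffusion problem \eqref{form} with initial data $q(x)\in L^1(\mathbb R)$ chosen (via a one-dimensional argument on \eqref{oned}, exactly as for the Heaviside-perturbation result quoted in the introduction) so that $q(x) \ge \theta_0(x,\tilde x) - \Phi(x - x_0)$ for a suitable shift $x_0 > 0$; the compact-support hypothesis on $\theta_0 - H$ and $0 \le \theta_0 \le 1$ guarantee such a $q$ exists. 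Plugging the ansatz into $\partial_t - u\cdot\nabla + \Delta$ and using $-c_0\Phi' - \Phi'' = f(\Phi)$ together with the equation for $Q$, the supersolution inequality reduces to
\begin{equation*}
-\dot X(t)\,\Phi'(\xi - X) + u\cdot e_1\,\Phi'(\xi - X) \ge f\big(\Phi(\xi-X) + Q\big) - f\big(\Phi(\xi-X)\big).
\end{equation*}
Since $\Phi' \le 0$, $|u\cdot e_1| \le \|u\|_\infty$, and $f$ is Lipschitz with constant $L_f$ while $f$ vanishes outside $(\vartheta_0,1)$, the right-hand side is controlled by $L_f |Q|$ on the relevant $\xi$-range; the standard trick is to also subtract a small multiple of $\Phi'$ using $f'(1)<0$ and $\Phi(-\infty)=1$ to absorb the $Q$ term where $\Phi$ is near $1$, exactly as in Kanel-type stability arguments. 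This forces the choice $\dot X(t) = -\|u(\cdot,t)\|_\infty - (\text{const})$, i.e. $X(t) = -\bu(t)t - C_{\Omega,0}\sqrt t + (\text{lower order})$, which produces the shifted arguments in \eqref{uplowbound}. The sub-solution is built symmetrically with $-Q$ and the opposite shift.

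Next, having \eqref{uplowbound} with the error term $Q$, I would derive \eqref{1st}. The point is that $Q$ solves precisely the problem \eqref{phi} of Lemma \ref{estymata} with $\phi_0 = q \in L^1$, so $\|Q(\cdot,t)\|_\infty \le C_\Omega t^{-1/2}\|q\|_{L^1}$ for $t \gg 1$; moreover $\|Q(\cdot,t)\|_{L^1(D)} \le \|q\|_{L^1(\mathbb R)}|\Omega|$ by the $L^1$-contraction \eqref{2.3}. To get the Nusselt--burning-rate relation one integrates \eqref{heat} against $\theta$ (or against $1-\theta$) over $D$: using $\mathrm{div}\,u = 0$ and the no-flux boundary condition, $\frac{d}{dt}\int_D (\tfrac12\theta^2 \text{ or } \theta) = -\int_D |\nabla\theta|^2 + \int_D f(\theta)(\cdots)$, and after dividing by $t$ and integrating, the boundary contributions at $x = \pm\infty$ are evaluated using the front-like profile \eqref{uplowbound}: near $x = -\infty$ one has $\theta \to 1$ up to $Q$, near $x=+\infty$ one has $\theta \to 0$ up to $Q$, and the location of the transition region is $O(\bu(t)t + \sqrt t)$. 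The $\bu(t)$ term is exactly what contributes $\bu(t)$ to the right-hand side of \eqref{1st}, the $\frac12\bb(t)$ comes from the $f(\theta)$ flux term through the one-dimensional identity $\int f(\Phi) = c_0$ and $\int \Phi f(\Phi) = \tfrac12 c_0$ type relations (the factor $\tfrac12$ is the signature of testing against $\theta$ rather than $1$), and the $C_{\Omega,0}(\frac1t + \frac1{\sqrt t})$ collects $\|Q\|_\infty$, $\|Q\|_{L^1}/t$ and the $\sqrt t / t$ drift correction.

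I expect the main obstacle to be the careful bookkeeping at the two ends of the channel when integrating \eqref{heat} — i.e. making rigorous the "evaluation of boundary terms at $x = \pm\infty$" in an unbounded domain. One must justify that $\theta(\cdot,t) - \mathbf 1_{\{x < \text{front}\}}$ is integrable uniformly in $t$ with the front located as in \eqref{uplowbound}, control $\int_D |\nabla\theta|^2$ growth (this is where $\bn(t)$ being finite a priori, guaranteed by $u_0 \in W^{1,2}$ and parabolic regularity, is used), and handle the fact that $Q$ is only in $L^1 \cap L^\infty$ rather than in better spaces. The cleanest route, and the one I would take, is to cut off at $|x| = R$, run the energy identity on the bounded slab $(-R,R)\times\Omega$, send $R \to \infty$ using the exponential decay of $\Phi$ at $\pm\infty$ and the decay of $Q$ (from Lemma \ref{estymata} applied on subregions, or directly from the $L^1$ bound and a parabolic tail estimate), and only then divide by $t$ and pass $t \to \infty$. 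Since \cite{BCR06} carried this out in 2d and the only genuinely dimension-dependent input — the Nash-type decay of the advection--diffusion semigroup — has been re-established in Lemma \ref{estymata}, the remaining steps are word-for-word the same, which is why the statement can fairly be said to follow "by exactly the same proof."
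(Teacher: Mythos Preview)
Your overall strategy---build $\Phi$-based sub/super-solutions corrected by the advected--diffused $Q$, then use the bounds to derive \eqref{1st} by an energy identity---matches the paper. But the mechanism you give for the $C_{\Omega,0}\sqrt t$ shift is wrong, and this is the one place where the 3d input (Lemma \ref{estymata}) actually enters the argument.

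You write ``$\dot X(t) = -\|u(\cdot,t)\|_\infty - (\text{const})$, i.e.\ $X(t) = -\bu(t)t - C_{\Omega,0}\sqrt t$''. Integrating a constant gives a term linear in $t$, not $\sqrt t$; and the ``$f'(1)<0$'' trick you invoke would at best produce a bounded or linearly growing shift, not the claimed one. The paper instead takes the extra shift to be exactly $C\sqrt t$, so that its time derivative is $\frac{C}{2\sqrt t}$. The point is that Lemma \ref{estymata} is used \emph{inside} the sub/super-solution step, not only afterwards: it gives $\|Q(\cdot,t)\|_\infty \le C_\Omega t^{-1/2}\|q\|_{L^1}$, and hence on the middle range where $\Phi \in (\vartheta_0,\, 1-(1-\vartheta_0)/4)$ and $|\Phi'|$ is bounded below by a positive constant, one has
\[
\frac{C}{2\sqrt t}\,\Phi' + \|f'\|_\infty\|Q(\cdot,t)\|_\infty \le 0
\]
for $C$ large enough. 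In the tails the paper does not use $f'(1)<0$ at all: it simply requires $0\le q\le \alpha:=\min(\vartheta_0/2,(1-\vartheta_0)/4)$, so by the maximum principle $0\le Q\le\alpha$, and then both $f(\Phi)$ and $f(\Phi\pm Q)$ vanish there. Once you fix this, your outline for \eqref{1st} (test \eqref{heat} against $\theta$, truncate in $x$, use the front bounds and $L^1$-control of $Q$ to close) is the same as in \cite{BCR06} and is correct.
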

\begin{proof}
We only prove (\ref{uplowbound}), since it implies (\ref{1st}) as in
\cite{BCR06}, Lemma 4.2. Define:
$$\psi_l(x, \tilde x, t) = \Phi\left(x-c_0t+x_0+\bar U(t)t +
  C\sqrt{t}\right) - Q(x,\tilde x, t),$$
where $x_0, C>0$ are to be determined later, and $Q$ is as in (\ref{form}) 
with $q$ appropriately chosen. 

To prove that
$\psi$ is a subsolution, we first need to show the non-positivity of
the following expression:
\begin{equation}\label{2.27}
\begin{split}
& (\psi_l)_t + u\cdot\na \psi_l - \D \psi_l - f(\psi_l)\\
& = \left( \|u(\cdot, t)\|_{L^\infty(D)} + \frac{C}{2\sqrt{t}} +
  u_1(x,\tilde x, t)\right)
\Phi'\left(x-c_0t+x_0+\bar U(t)t + C\sqrt{t}\right) \\ 
& \qquad\qquad\qquad\qquad \qquad\qquad\qquad\qquad 
\qquad\qquad\qquad
+ f(\Phi) - f(\Phi - Q) \\ 
& \leq \frac{C}{2\sqrt{t}}\Phi' +  f(\Phi) - f(\Phi - Q).
\end{split}
\end{equation}
Take  $q\in L^1(\mathbb{R})$ such that $0\leq q(x)\leq \alpha = \min\left(\vartheta_0/2,
  (1-\vartheta_0)/4\right)$. By the maximum principle we have:
\begin{equation}\label{2.28}
0\leq Q(x,\tilde x, t) \leq \alpha.
\end{equation}
Let now $x_0>0$ be such that:
\begin{equation}\label{2.29}
\theta_0(x,\tilde x, 0) \geq \Phi(x+x_0) - q(x).
\end{equation}
Finally, let $C$ be large enough so that when $\Phi\in \left(\vartheta_0,
  1 - (1-\vartheta_0)/4\right)$ then:
\begin{equation*}
\begin{split}
\frac{C}{2\sqrt{t}}\Phi' & + f(\Phi)  - f(\Phi - Q)  \leq
\frac{C}{2\sqrt{t}}\Phi' + \|f'\|_{L^\infty} \|Q(\cdot, t)\|_{L^\infty(D)}
\\ & \leq - \frac{C}{2\sqrt{t}} ~ \min_{\Phi(s) \in (\vartheta_0,
  (3+\vartheta_0)/4)} |\Phi'(s)| ~ + \frac{C_\Omega |\Omega|}{\sqrt{t}} 
\|f'\|_{L^\infty}\|q\|_{L^1(\mathbb{R})} \leq 0,
\end{split}
\end{equation*}
where we used Lemma \ref{estymata} to estimate $\|Q\|_\infty$.
On the other hand, for $\Phi\in(0,\vartheta_0)\cup (1-(1-\vartheta_0)/4,1)$  the
nonpositivity of the right hand side in (\ref{2.27}) follows directly,
via (\ref{2.28}). Concluding, (\ref{2.27}) and (\ref{2.29}) imply the lower bound in (\ref{uplowbound}).

\medskip

The upper bound follows by assuring that:
$$\psi_r(x, \tilde x, t) = \Phi\left(x-c_0t - x_0 - \bar U(t)t -
  C\sqrt{t}\right) + Q(x,\tilde x, t)$$
is a supersolution. Similarly as above, this follows by choosing
$x_0$ such that:
$$\theta_0(x,\tilde x, 0) \leq \Phi(x-x_0) + q(x)$$
in addition to (\ref{2.29}), and having $C$ large enough so that: 
\begin{equation*}
\begin{split}
(\psi_r)_t + u\cdot&\na \psi_r - \D \psi_r - f(\psi_r)
\geq - \frac{C}{2\sqrt{t}}\Phi' -  \|f'\|_{L^\infty} \|Q(\cdot,
t)\|_{L^\infty(D)} \\ & \geq \frac{C}{2\sqrt{t}} ~ \min_{\Phi(s) \in (\vartheta_0/2,
  (1+\vartheta_0)/2)} |\Phi'(s)| ~ - \frac{C_\Omega |\Omega|}{\sqrt{t}} 
\|f'\|_{L^\infty}\|q\|_{L^1(\mathbb{R})} \geq 0,
\end{split}
\end{equation*}
when $\Phi \in (\vartheta_0/2, 1-(1-\vartheta_0)/2)$.
\end{proof}

Clearly, Theorem \ref{secondtheo} follows from (\ref{uplowbound}) and
Lemma \ref{estymata}. We now have:

\begin{lem} \label{2ndlem}
There exists a constant $C_{\Omega,0}$ depending on $\Omega$, $f$, and
on the initial data $\t_0$, such that for all sufficiently large $t>>1$:
\begin{equation}\label{2nd}
\bb(t) \le c_0 + \bu(t) +
C_{\Omega,0}\left(\frac1t+\frac{1}{\sqrt{t}}\right),
\end{equation}
and
\begin{equation}\label{3rd}
\bb(t) \ge c_0 - \bu(t) -
C_{\Omega,0}\left(\frac1t+\frac{1}{\sqrt{t}}\right).
\end{equation}
\end{lem}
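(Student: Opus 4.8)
\textbf{Proof proposal for Lemma \ref{2ndlem}.}

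The plan is to obtain \eqref{2nd} and \eqref{3rd} by integrating the temperature equation \eqref{heat} over a suitable truncated cylinder and over time, exactly as in \cite{BCR06}, Lemma 4.1, combined with the comparison bounds from Lemma \ref{1stlem}. First I would recall the definition of $B(t)$ in \eqref{bn} and rewrite $f(\theta) = \theta_t + u\cdot\nabla\theta - \Delta\theta$; integrating this identity over $D$ (or over the slab $(-R,R)\times\Omega$ and letting $R\to\infty$, to justify the manipulations for front-like data) and using $\mathrm{div}\,u=0$, the no-flux boundary condition on $\theta$ and the no-slip condition $u=0$ on $\partial D$, the advection and diffusion terms drop out and one is left with a relation of the form $|\Omega| B(t) = \frac{d}{dt}\int_D(\theta - \text{reference})\,dx\,d\tilde x$ plus boundary-at-infinity flux contributions. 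The point is to control these spatial-infinity terms using that $\theta$ is trapped between two shifted copies of the laminar front $\Phi$ up to the error $Q$, as provided by \eqref{uplowbound}.

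Next I would average in time: $\bar B(t) = \frac1t\int_0^t B(s)\,ds$, so the time-derivative term telescopes and yields $\frac1t\big[\int_D(\theta(\cdot,t)-\theta(\cdot,0))\big]$-type quantities, which are finite because $\theta_0 - H$ is compactly supported and, by \eqref{uplowbound}, $\theta(\cdot,t)$ stays within an $O(\bar U(t)t + \sqrt t)$ window of a travelling front. The front $\Phi$ satisfies $-c_0\Phi' - \Phi'' = f(\Phi)$, whose integration over $\mathbb R$ gives $\int_{\mathbb R} f(\Phi) = c_0$; so comparing $\int_D f(\theta)$ with $|\Omega|\int_{\mathbb R} f(\Phi(\,\cdot\, - c_0 t \mp x_0 \mp \bar U(t)t \mp C_{\Omega,0}\sqrt t))$ via the monotonicity of $f$ along $\Phi$ and Lipschitz continuity of $f$, the leading contribution is $c_0$, the shift by $\bar U(t)t$ contributes (after dividing by $t$) the $\bar U(t)$ term, and the remaining discrepancies — the contribution of $Q$, estimated in $L^\infty$ by $C_\Omega\|q\|_{L^1}/\sqrt t$ through Lemma \ref{estymata}, and the $\frac1t$ telescoping remainder — are absorbed into $C_{\Omega,0}(\frac1t + \frac1{\sqrt t})$. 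Doing the comparison with $\psi_l$ gives the lower bound \eqref{3rd} and with $\psi_r$ the upper bound \eqref{2nd}.

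The main obstacle I expect is making the integration-by-parts rigorous and tracking the spatial-infinity terms: $\theta\to 1$ as $x\to-\infty$ and $\theta\to 0$ as $x\to+\infty$, so $\int_D f(\theta)$ converges but $\int_D \theta$ does not, and one must instead integrate $\theta_t$ against a cutoff or subtract a fixed profile (e.g. $H(x-c_0 t)$ or $\Phi$) before integrating, then show the boundary fluxes through $x = \pm R$ vanish in the limit using the exponential decay of $\Phi'$ and the trapping estimate \eqref{uplowbound}. Once that bookkeeping is set up, the remaining estimates are routine: the $Q$-term is handled by Lemma \ref{estymata}, the front-shift terms by elementary properties of $\Phi$ and $f$, and the rest by the Lipschitz bound on $f$. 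I would carry the argument out in the order: (1) derive the integrated identity for $B(t)$ with a reference profile; (2) time-average and identify the telescoping term; (3) insert \eqref{uplowbound} and use $\int f(\Phi) = c_0$ together with monotonicity/Lipschitz bounds; (4) collect error terms and invoke Lemma \ref{estymata}.
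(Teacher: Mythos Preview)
Your skeleton --- integrate \eqref{heat} over $D$, drop the advection and diffusion terms via the boundary conditions, time-average so that $\bar B(t)=\frac{1}{|\Omega|t}\int_D\big(\theta(\cdot,t)-\theta(\cdot,0)\big)$, and then insert the trapping bounds \eqref{uplowbound} --- is exactly the paper's approach. Two points in your execution diverge from the paper and would cause trouble if carried out as you describe.

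First, once you have the telescoped identity there is no reason to return to $f(\theta)$ and attempt to compare $\int_D f(\theta)$ with $|\Omega|\int_{\mathbb R} f(\Phi(\cdot))$ ``via the monotonicity of $f$ along $\Phi$ and Lipschitz continuity''. The reaction $f$ is not monotone on $[0,1]$, so $\theta\le\psi_r$ does not give $f(\theta)\le f(\psi_r)$, and a Lipschitz argument would require $\theta$ to be close to a single shifted $\Phi$, which it is not (the two bounding fronts in \eqref{uplowbound} are separated by $O(\bar U(t)t+\sqrt t)$). The paper never touches $f$ here: it simply bounds $\theta(\cdot,t)$ above by $\Phi(x-c_0t-x_0-\xi(t))+Q$ and $\theta(\cdot,0)$ below by $\Phi(x+x_0)-q$, and then uses the elementary identity
\[
\int_{\mathbb R}\big(\Phi(x-a)-\Phi(x-b)\big)\,dx = a-b,
\]
valid for any $a>b$ because $\Phi(-\infty)=1$, $\Phi(+\infty)=0$. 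This produces $c_0 t + 2x_0 + \xi(t)$ directly, and dividing by $t$ gives $c_0+\bar U(t)+C_{\Omega,0}(t^{-1}+t^{-1/2})$; the identity $\int_{\mathbb R} f(\Phi)=c_0$ is not used.

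Second, the $Q$ contribution is handled with the $L^1$ estimate $\|Q(\cdot,t)\|_{L^1(D)}\le\|q\|_{L^1}$ (from the $L^1$ contraction for \eqref{phi}), not with the $L^\infty$ bound from Lemma~\ref{estymata}: you are integrating $Q$ over the unbounded domain $D$, so an $L^\infty$ bound alone yields nothing. This gives the $C_{\Omega,0}/t$ term after division by $t$.
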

\begin{proof}
Denoting $\xi(t) = \bar U(t) t + C_{\Omega,0}\sqrt{t}$and using the
bound  (\ref{uplowbound}), we obtain:
\begin{equation}\label{3.12}
\begin{split}
\bb(t)&=\frac{1}{|\Omega|t}\int_0^t\int_D f(\t) 
=\frac{1}{|\Omega|t}\int_0^t\int_D \t_t ~\mbox{d}x~\mbox{d}\tilde x
\\ &  =\frac{1}{|\Omega|t}\int_D \t(x,\tilde x,t)-\t(x,\tilde x,0) ~\mbox{d}x~\mbox{d}\tilde x\\
&\leq \frac{1}{|\Omega|t}\int_D \Phi(x-c_0t-x_0-\xi(t))+Q(x,\tilde x,
t)-\Phi(x+x_0)+Q(x,\tilde x, 0)~\mbox{d}x~\mbox{d}\tilde x.
\end{split}
\end{equation}
By (\ref{2.3}) and the construction of the corrector $Q$, it follows that:
\begin{equation}\label{ab}
\frac{1}{|\Omega|t} \int_D |Q(x,\tilde x, t) + Q(x, \tilde x, 0)|
~\mbox{d}x~\mbox{d}\tilde x \leq \frac{2}{t} \|q\|_{L^1(\mathbb{R})}
\leq \frac{C_{\Omega, 0}}{t}. 
\end{equation}
Further:
\begin{equation}\label{abc}
\begin{split}
\frac{1}{|\Omega|t} \int_D & \Phi(x-c_0t-x_0-\xi(t)) -\Phi(x+x_0) \\ &
=
\frac{1}{t} \Bigg[\int_{-\infty}^{c_0t+x_0+\xi(t)} (\Phi(x-c_0t-x_0-\xi(t))-1)\\
& \qquad + \int_{-\infty}^{-x_0} (1-\Phi(x+x_0)) + 
\int_{-x_0}^{c_0t+x_0+\xi(t)} (1-\Phi(x+x_0))\\
& \qquad + \int_{c_0t+x_0+\xi(t)}^{\infty} \Phi(x-c_0t-x_0-\xi(t)) \\
& \qquad - \int_{c_0t+x_0+\xi(t)}^{-x_0} \Phi(x + x_0) 
- \int_{-x_0}^\infty\Phi(x+x_0)\Bigg]\\
& = \frac{1}{t} \int_{-x_0}^{c_0t+x_0+\xi(t)} 1 ~\mbox{d}s = \frac{|c_0t + 2x_0 +
  \xi(t)|}{t} \\ & \leq c_0 + \bar U(t) + C_{\Omega, 0} \left(\frac{1}{t} + \frac{1}{\sqrt{t}}\right),
\end{split}
\end{equation}
which together with (\ref{ab}) proves (\ref{2nd}).
To prove (\ref{3rd}), similarly as in (\ref{3.12}) we note that:
\begin{equation*}
\bar B(t) \geq \frac{1}{|\Omega|t}\int_D \Phi(x-c_0t + x_0 +\xi(t)) - Q(x,\tilde x,
t)-\Phi(x-x_0) - Q(x,\tilde x, 0)~\mbox{d}x~\mbox{d}\tilde x,
\end{equation*}
and as in (\ref{abc}) we obtain:
\begin{equation*}
\begin{split}
\frac{1}{|\Omega|t} \int_D &\Phi(x-c_0t+x_0+\xi(t)) -\Phi(x-x_0) =
\frac{|c_0t - 2x_0 - \xi(t)|}{t}\\ & 
\geq c_0 - \bar U(t) - C_{\Omega, 0} \left(\frac{1}{t} + \frac{1}{\sqrt{t}}\right).
\end{split}
\end{equation*}
Together with (\ref{ab}) the above implies (\ref{3rd}).
\end{proof}

\bigskip

\noindent {\bf Proof of Theorem \ref{main}.}\\
Multiplying the fluid equation \eqref{fluid} by $u$ and integrating over $D$, one obtains:
\begin{equation}\label{help0}
\frac{\mbox{d}}{\mbox{d}t} \|u\|_{L^2}^2 + \nu\|\na u\|_{L^2}^2 \leq
C_\Omega \frac{\rho^2}{\nu}\|\nabla \theta\|^2_{L^2}.
\end{equation}
Integrating (\ref{fluid}) against $u_t$ gives, in turn:
\begin{equation*}
\|u_t\|_{L^2}^2  + \nu \frac{\mbox{d}}{\mbox{d}t} \|\nabla u\|_{L^2}^2 \leq
C_\Omega \rho^2\|\nabla \theta\|^2_{L^2},
\end{equation*}
where in both inequalities above we used Lemma \ref{h}
to ``replace'' the term $\theta\vec\rho$ by $\rho\nabla \theta$,
Taking averages in time, we get:
\begin{equation}\label{help}
\begin{split}
 & \frac{1}{t}\int_0^t \|\na u\|_{L^2}^2 ~\mbox{d}t\leq
 C_\Omega\frac{\r^2}{\nu^2}N(t) + \frac{1}{\nu t} \|\na
 u_0\|^2_{L^2},\\
&  \frac{1}{t}\int_0^t \|u_t\|_{L^2}^2 ~\mbox{d}t\leq
 C_\Omega \r^2 N(t) + \frac{\nu}{t} \|\na u_0\|^2_{L^2}.
\end{split}
\end{equation}
By Lemma \ref{supest} or (\ref{easy}), and Lemma \ref{h} it now follows
that:
\begin{equation*}
\begin{split}
\|u\|_\infty &\le C_\Omega \left(\|\nabla u\|_{L^2} + \frac{1}{\nu}\|u_t\|_{L^2}
  + \frac{\rho}{\nu} \|\na \theta\|_{L^2}\right).
\end{split}
\end{equation*}
Using (\ref{help}) we obtain:
\begin{equation}\label{help2}
 \begin{split}
\bar U(t)&\le C_\Omega\left( 
\left(\frac{1}{t} \int_0^t
\|\nabla u\|_{L^2}^2\right)^{1/2} +
\frac{1}{\nu}\left(\frac{1}{t}\int_0^t\|u_t\|_{L^2}^2\right)^{1/2}
+ \frac{\rho}{\nu}\sqrt{N(t)}\right)\\ & 
\leq C_\Omega\left( \frac{\rho}{\nu} \sqrt{N(t)} + 
\frac{1}{\sqrt{\nu t}} \|\nabla u_0\|_{L^2}  +
\frac{1}{\sqrt{t}} \|\nabla u_0\|_{L^2}\right).
\end{split}
\end{equation}
By (\ref{1st}) and (\ref{2nd}) we hence get, for large $t>>1$:
\begin{equation}\label{help3}
 \begin{split}
\bar N(t) & \le \frac{1}{2} c_0 + 
\frac{3}{2}\bar U(t) + C_{\Omega, 0}\left(\frac{1}{t} + \frac{1}{\sqrt{t}}\right)\\ & 
\leq \frac{1}{2} c_0 + C_\Omega \frac{\rho}{\nu} \sqrt{N(t)} + 
C_{all}\left( \frac{1}{t} + \frac{1}{\sqrt{t}} \right).
\end{split}
\end{equation}
where $C_{all}$ depends on $\Omega, f, \nu, \theta_0$ and $u_0$.
Consequently:
\begin{equation*}
 \begin{split}
\bar N(t) & \le \left( C_\Omega \frac{\rho}{\nu} + \sqrt{\frac{1}{2} c_0 + 
 C_\Omega \frac{\rho^2}{\nu^2}} + C_{all} \sqrt{\frac{1}{t} +
\frac{1}{\sqrt{t}} }~\right)^2\\
& \leq \left(C_\Omega \frac{\rho}{\nu} +
\sqrt{\frac{1}{2} c_0 +  C_\Omega
    \frac{\rho}{\nu}}\right)^2 
+ C_{all} \left(\frac{1}{t} + \frac{1}{\sqrt{t}} \right),
 \end{split}
\end{equation*}
and, returning to (\ref{help2}):
$$\bar U(t) \leq C_\Omega\left( \frac{\rho^2}{\nu^2} +
  \frac{\rho}{\nu} \sqrt{\frac{1}{2} c_0 +  C_\Omega
    \frac{\rho}{\nu}}\right) + C_{all} \left(\frac{1}{t} + \frac{1}{\sqrt{t}} \right). $$
In view of Lemma \ref{2ndlem}, Theorem \ref{main} is hence proven.
\endproof

\bigskip

\noindent {\bf Proof of Theorem \ref{remi}.}\\
We only prove the assertion (ii), because the 2d case in (i) follows with exactly
the same calculations as in Theorem \ref{main} and Theorem \ref{secondtheo} .

For the stationary Navier-Stokes-Boussinesq system, 
using the Poincar\'e inequality and
(\ref{const_exact}), we obtain the following counterpart of (\ref{help0}):
\begin{equation}\label{help11}
\|\nabla u\|_{L^2(D)} \leq \frac{\rho}{\nu} C_P\left(C_{PW} +
  \left(\fint|\vec g\cdot (0,\tilde x)|^2~\mbox{d}\tilde x\right)^{1/2}
\right) \|\nabla \theta\|_{L^2(D)},
\end{equation}
where $C_P$ and $C_{PW}$ are, respectively, the Poincar\'e and the
Poincar\'e-Wirtinger constants of $\Omega$.
By Lemma \ref{estymata} and argueing as (\ref{help2}), we arrive at:
\begin{equation*}
\bar U(t) \leq \frac{\rho^2}{\nu^3} \frac{|\Omega|}{2\pi} C_P^2 \left(C_{PW} +
  \left(\fint|\vec g\cdot (0,\tilde x)|^2~\mbox{d}\tilde x\right)^{1/2}
\right)^2 \bar N(t) + C_\Omega \frac{\rho}{\nu} \sqrt{\bar N(t)},
\end{equation*}
while the counterpart of (\ref{help3}) in the present case is:
\begin{equation*}
\begin{split}
\bar N(t) \leq \frac{1}{2}c_0 & + C_\Omega \frac{\rho}{\nu} \sqrt{\bar N(t)}
+ C_{\Omega, 0} \left(\frac{1}{t} + \frac{1}{\sqrt{t}}\right) \\
& + \frac{3|\Omega|}{4\pi}\frac{\rho^2}{\nu^3} C_P^2 \left(C_{PW} +
  \left(\fint|\vec g\cdot (0,\tilde x)|^2~\mbox{d}\tilde x\right)^{1/2}
\right)^2\bar N(t) 
\qquad
\forall t>>1.
\end{split}
\end{equation*}
It is therefore clear that when the constant in front of $\bar N(t)$
in the last term of the right hand side above is smaller than $1$, the
results of Theorem \ref{main} and Theorem \ref{secondtheo} 
follow as in the case of the Stokes-Boussinesq system.
\endproof

\end{document}